\newtheorem{theorem}{Theorem}[section]
\newtheorem{corollary}[theorem]{Corollary}
\newtheorem{lemma}[theorem]{Lemma}
\newtheorem{definition}[theorem]{Definition}
 \theoremstyle{remark}
\numberwithin{equation}{section}
\newcommand{\ZZ}{\mathbb{Z}} 
\newcommand{\NN}{\mathbb{N}} 
\newcommand{\RR}{\mathbb{R}} 
\begin{document}

\title{Lagrange interpolation at real projections of Leja sequences for the unit disk}
\author{Jean-Paul Calvi}
\address{Institut de Math\'{e}matiques de Toulouse, Universit\'{e} Paul Sabatier, Toulouse, France}
\email{jean-paul.calvi@math.ups-tlse.fr}
\author{Phung Van Manh}
\address{Institut de Mathématiques,
Université de Toulouse III and CNRS (UMR 5219), 31062, Toulouse Cedex 9, France and Department of Mathematics, Hanoi University of Education,
136 Xuan Thuy street, Caugiay, Hanoi, Vietnam}\email{manhlth@gmail.com}
\begin{abstract} We show that the Lebesgue constant of the real projection of Leja sequences for the unit disk grows like a polynomial. The main application is the first construction of explicit multivariate interpolation points in $[-1,1]^N$ whose Lebesgue constant also grows like a polynomial.
\end{abstract}
\subjclass[2010]{Primary 41A05, 41A63}
\keywords{Lagrange interpolation, Lebesgue constants, Leja sequences}
\maketitle

\section{Introduction}\label{sec:introduction} 
We pursue the work initiated in \cite{jpcpvm} that aims to construct \emph{explicit} and (or) \emph{easily computable} sets of efficient points for multivariate Lagrange interpolation by using the process of intertwining (see below) certain univariate sequences of points. Here, the efficiency of the interpolation points is measured by the growth of their Lebesgue constant (the norms of the interpolation operator). Namely, we look for sets of points $\mathbf{P}_n\subset \mathbb{R}^N$ --- for interpolation by polynomials of total degree at most $n$ --- for which the Lebesgue constants $\Delta(\mathbf{P}_n)$ grows at most like a polynomial in $n$. We say that such points are good interpolation points in the sense that if $\Delta(\mathbf{P}_n)=O(n^\alpha)$ with $\alpha\in \mathbb{N}^\star$ then, in view of a classical result of Jackson, the Lagrange interpolation polynomials at $\mathbf{P}_n$ of any function with $\alpha+1$ continuous (total) derivatives converge uniformly. This is detailed in the paper. In our former work \cite{jpcpvm}, multivariate interpolation points with good Lebesgue constants were constructed on the Cartesian product of many plane compact subsets bounded by sufficiently regular Jordan curves (including, of course, polydiscs) starting from Leja points for the unit disc (see below). Yet, from a practical point of view, especially if we have in mind applications to numerical analysis, the real case is more interesting. It is the purpose of this paper to exhibit explicit interpolation points in $[-1,1]^N$ with a Lebesgue constant growing at most like a polynomial. As far as we know, this is the first general construction of such points. This will be done by suitably modifying the methods employed in \cite{jpcpvm}. Actually, the unidimensional points will be taken as the projections on the real axis of the points of a Leja sequence. We shall first show how to describe (and compute) these points and, in particular, prove that they are Chebyshev-Lobatto points (of increasing degree) arranged in a certain manner. We shall then study their Lebesgue constant to prove that it grows at most like $n^3\log n$ where $n$ is the degree of interpolation. The passage to the multivariate case is identical to that shown in \cite{jpcpvm} and will not be detailed.

\smallskip

\textit{Notation.}  We refer to \cite{jpcpvm} for basic definitions on Lagrange interpolation theory. Let us just indicate that, given a finite set $A$, we write $w_A:=\prod_{a\in A} (\cdot -a)$. The fundamental Lagrange interpolation polynomial (FLIP) for $a\in A$ is denoted by $\ell(A,a;\cdot)$. We have $$\ell(A,a;\cdot)=\prod_{b\in A, b\neq a}(\cdot -b)/(a-b)= w_A/\big(w_A'(a)(\cdot-a)\big).$$ The Lagrange interpolation polynomial of $f$ is $\mathbf{L}[A;f]=\sum_{a\in A} f(a)\ell(A,a;\cdot)$ and the norm of $\mathbf{L}[A; \cdot]$ as an operator on $C(K)$ (where $K$ is a compact subset containing $A$) is the Lebesgue constant $\Delta(A)=\Delta(A,K)$. It is known that $\Delta(A)=\max_{z\in K} \sum_{a\in A} |\ell(A,a;z)|$. 
\par We shall denote by $M$, $M'$, etc constants independent of the relevant parameters.  Occurrences of the same letter in different places do not necessarily refer to the same constant.

\section{Leja sequences and their projections on the real axis} 

\subsection{Leja sequences for the unit disk} We briefly recall the definition and the structure of a Leja sequence for the unit disk $D=\{|z|\leq 1\}\subset\mathbb{C}$. A $k$-tuple $E_k=(e_0,\dots,e_{k-1})\in D^k $ with $e_0=1$ is a \emph{$k$-Leja section} for $D$ if, for $j=1,\dots ,k-1$, the $(j+1)$-st entry $e_{j}$ maximizes the product of the distances to the $j$ previous points, that is \begin{equation*}\prod_{m=0}^{j-1}  \left|e_j-e_m\right| = \max_{z\in D} \prod_{m=0}^{j-1} \left|z-e_m\right|, \quad j=1,\dots,k-1. \end{equation*} The maximum principle implies that the points $e_i$ actually lie on the unit circle $\partial D$. A sequence $E=(e_k\,:\,k\in\mathbb{N})$ for which $E_k:=(e_0,\dots,e_{k-1})$ is a $k$-Leja section for every $k\in\mathbb{N}^{\star}$ is called a \emph{Leja sequence} (for $D$). Of course, the points of a Leja sequence are pairwise distinct. 
\par
The structure of a Leja sequence is studied in \cite{biacal} where one can find the following result.
\begin{theorem}[Bia{\l}as-Cie{\.z} and Calvi]\label{th:StructureLeja} A Leja sequence is characterized by the following two properties. 
\begin{enumerate}
	\item  The underlying set of a $2^n$-Leja section for $D$ is formed of the $2^n$-th roots of unity. 
	\item If $E_{2^{n+1}}$ is a $2^{n+1}$-Leja section then there exist a $2^n$-root $\rho$ of $-1$ and a $2^n$-Leja section $E^{(1)}_{2^n}$ such that $E_{2^{n+1}}=(E_{2^n}\, ,\, \rho E^{(1)}_{2^n})$.
\end{enumerate}
\end{theorem}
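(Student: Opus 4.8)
The plan is to establish the two properties together by induction on $n$. The base case $n=0$ is immediate: the only $1$-Leja section is $(1)$; since $e_1$ must maximize $|z-1|$ over $D$, the only $2$-Leja section is $(1,-1)$, which is the set of square roots of unity and has the form in (2) with $\rho=-1$ and $E^{(1)}_1=(1)$. So suppose $n\ge 1$ and that, for every $m\le n$, the underlying set of each $2^m$-Leja section is the set of $2^m$-th roots of unity.

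Fix a $2^{n+1}$-Leja section $E_{2^{n+1}}=(e_0,\dots,e_{2^{n+1}-1})$; its first $2^n$ entries form a $2^n$-Leja section $E_{2^n}$, so the induction hypothesis gives $w_{E_{2^n}}(z)=z^{2^n}-1$. We record two auxiliary facts. First, any $j$-Leja section with $j\le 2^n$ extends to a $2^n$-Leja section (at each step one appends a maximizer over the compact set $D$ of the relevant product), so by the induction hypothesis its underlying set lies among the $2^n$-th roots of unity; in particular, if $F$ is a $j$-Leja section with $j\le 2^n-1$, then every point of $D$ at which $|w_F|$ attains its maximum is a $2^n$-th root of unity, because appending such a point to $F$ yields a $(j+1)$-Leja section. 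Second, for $z=e^{i\theta}\in\partial D$ we have $|w_{E_{2^n}}(z)|=|z^{2^n}-1|=2|\sin(2^{n-1}\theta)|$, whose maximum value $2$ is attained exactly when $z^{2^n}=-1$; since $e_{2^n}$ maximizes $|w_{E_{2^n}}|$ over $D$, it follows that $e_{2^n}^{2^n}=-1$, and we set $\rho:=e_{2^n}$.

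Write $e_{2^n+i}=\rho f_i$ for $0\le i\le 2^n-1$, so that $f_0=1$, and set $F_i:=(f_0,\dots,f_{i-1})$. We prove by a second induction on $i$ that $F_i$ is an $i$-Leja section; this is trivial for $i\le 1$, so assume it for $F_i$ with $1\le i\le 2^n-1$. The extremal condition defining $e_{2^n+i}$ says that $\rho f_i$ maximizes $|z^{2^n}-1|\prod_{m=0}^{i-1}|z-\rho f_m|$ over $z\in D$. Applying the substitution $z=\rho\zeta$, which is a bijection of $D$ onto itself, and using $\rho^{2^n}=-1$, this becomes the statement that $f_i$ maximizes $|\zeta^{2^n}+1|\,|w_{F_i}(\zeta)|$ over $\zeta\in D$. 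Since $|\zeta^{2^n}+1|\le 2$ on $D$, this product never exceeds $2\max_D|w_{F_i}|$; and by the first auxiliary fact the value $2\max_D|w_{F_i}|$ is actually attained, at a $2^n$-th root of unity where $|w_{F_i}|$ is maximal. Hence the maximum of the product equals $2\max_D|w_{F_i}|$, so at the maximizer $f_i$ both factors must be maximal: $|f_i^{2^n}+1|=2$, which forces $f_i^{2^n}=1$, and $|w_{F_i}(f_i)|=\max_D|w_{F_i}|$, which means precisely that $F_{i+1}=(f_0,\dots,f_i)$ is an $(i+1)$-Leja section. This completes the inner induction, so $F:=F_{2^n}$ is a $2^n$-Leja section, that is, property (2) holds with $E^{(1)}_{2^n}:=F$. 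By the induction hypothesis $w_F(\zeta)=\zeta^{2^n}-1$, so
\[
w_{E_{2^{n+1}}}(z)=(z^{2^n}-1)\prod_{i=0}^{2^n-1}(z-\rho f_i)=(z^{2^n}-1)\,\rho^{2^n}\,w_F(z/\rho)=(z^{2^n}-1)(z^{2^n}+1)=z^{2^{n+1}}-1 ,
\]
whence the underlying set of $E_{2^{n+1}}$ is the set of $2^{n+1}$-th roots of unity; this is property (1) at level $n+1$, and the outer induction is complete.

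The step I expect to be the main obstacle is the identification of the two extremal problems --- the equality $\max_D|\zeta^{2^n}+1|\,|w_{F_i}(\zeta)|=2\max_D|w_{F_i}(\zeta)|$. It succeeds precisely because $|w_{F_i}|$ is already maximal at some $2^n$-th root of unity, which is exactly a point at which the extra factor $|\zeta^{2^n}+1|$ is as large as it can be on $D$; and this fact about $|w_{F_i}|$ rests on the inductive description of $2^n$-Leja sections. Finally, the word ``characterized'' is to be read through the recursion: property (2) together with the base case $E_2=(1,-1)$ generates every Leja section (a $k$-Leja section being an initial segment of a $2^n$-Leja section once $2^n\ge k$), and running the substitution argument above in reverse shows that every sequence produced by this recursion satisfies the defining maximality conditions; thus (1) and (2) indeed characterize Leja sequences.
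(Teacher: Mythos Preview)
The paper does not prove this theorem: it is quoted from \cite{biacal} and used as a tool, so there is no in-paper proof to compare against. Your argument, however, is a correct self-contained proof. The double induction is set up cleanly, and the decisive point --- that $\max_D|\zeta^{2^n}+1|\,|w_{F_i}(\zeta)|=2\max_D|w_{F_i}|$ because $|w_{F_i}|$ already attains its maximum at a $2^n$-th root of unity --- is exactly right, and it is properly justified by your first auxiliary fact (every maximizer of $|w_{F_i}|$ is a $2^n$-th root of unity, since appending it yields a Leja section that extends to a $2^n$-Leja section, which by the outer hypothesis consists of $2^n$-th roots of unity). The computation $w_{E_{2^{n+1}}}(z)=z^{2^{n+1}}-1$ then follows at once, closing the outer induction. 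Your treatment of the converse (the ``characterized'' direction) is brief but adequate: once one knows that the points of any $2^n$-Leja section are $2^n$-th roots of unity, the same substitution $z=\rho\zeta$ shows that each $\rho f_i$ achieves the value $2\max_D|w_{F_i}|$ and hence satisfies the Leja extremality condition.
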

Repeated applications of the above rule show that if $d=2^{n_0}+2^{n_1}+\dots+2^{n_r}$ with $n_0>n_1>\dots > n_r\geq 0$ then 
\begin{align} E_{d}&=(E_{2^{n_0}}\, ,\, \rho_0 E^{(1)}_{d-2^{n_0}})
=(E_{2^{n_0}}\, ,\, \rho_0 E^{(1)}_{2^{n_1}}\, , \, \rho_1\rho_0 E^{(2)}_{d-2^{n_0}-2^{n_1}})
\\&=\dots =(E_{2^{n_0}}\, ,\, \rho_0 E^{(1)}_{2^{n_1}}\, , \, \rho_1\rho_0 E^{(2)}_{2^{n_2}}, \dots ,\, \rho_{r-1}\cdots\rho_1\rho_0 E^{(r)}_{2^{n_r}}), \label{eqn:define.rho}\end{align}
where each $E^{(j)}_{2^{n_j}}$ consists of a complete set of $2^{n_j}$-roots of unity, arranged in a certain order (actually, a $2^{n_j}$-Leja section), and $\rho_j$ is a $2^{n_j}$-th root of $-1$.

\subsection{Projections of Leja sequences} We are interested in polynomial interpolation at the projections on the real axis of the points of a Leja sequence. We eliminate repeated values and this somewhat complicates the description of the resulting sequence. We use $\Re(\cdot)$ to denote the real part of a complex number (or sequence). 

\begin{definition} A sequence $X$ (in $[-1,1]$) is said to be a $\Re$-Leja sequence if there exists a Leja sequence $E=(e_k\,:\, k\in \mathbb{N})$ such that $X$ is obtained by eliminating repeated points in $\Re(e_k\,:\, k\in \mathbb{N})$. We write $X=X(E)$. \end{definition}

In particular $X(E)$ is a subsequence of $\Re(e_k\,:\, k\in \mathbb{N})$. Since for every $n\in \NN$, the underlying set of a $2^{n+1}$-Leja section is a complete set of $2^{n+1}$-st roots of unity (Theorem \ref{th:StructureLeja}), the corresponding real parts form the set $\mathcal{L}_{2^n}$ of Chebyshev-Lobatto (or Gauss-Lobatto) points of degree $2^n$,
\begin{equation*} \mathcal{L}_{2^{n}}=\{\cos (j\pi/2^{n})\,:\, j=0, \dots, 2^{n}\}.  
\end{equation*} 
These points are the extremal points of the usual Chebyshev polynomial (of degree $2^n)$ and are sometimes referred to as the ``Chebyshev extremal points''.  
\par For future reference, we state this observation as a lemma. 
\begin{lemma}\label{th:RLareGL} Let $X$ be a $\Re$-Leja sequence. For every $n\in \NN$, the underlying set of $X_{2^n+1}=(x_0,\dots,x_{2^n})$ is the set of Chebyshev-Lobatto points $\mathcal{L}_{2^n}$. 
\end{lemma}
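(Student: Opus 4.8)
The plan is to reduce the statement to Theorem~\ref{th:StructureLeja}(1) together with an elementary identification of the real parts of the roots of unity. First I would observe that the real parts of the $2^{n+1}$-st roots of unity are exactly the $2^n+1$ points of $\mathcal{L}_{2^n}$: writing such a root as $\zeta_j=\exp(\pi\mathrm{i}j/2^n)$ for $0\le j\le 2^{n+1}-1$, we get $\Re(\zeta_j)=\cos(\pi j/2^n)$, and since $\cos(\pi j/2^n)=\cos\big(\pi(2^{n+1}-j)/2^n\big)$ the set of these numbers is $\{\cos(\pi j/2^n):0\le j\le 2^n\}=\mathcal{L}_{2^n}$, which has cardinality $2^n+1$ because $\cos$ is one-to-one on $[0,\pi]$.

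Next, let $E=(e_k:k\in\NN)$ be a Leja sequence with $X=X(E)$. By Theorem~\ref{th:StructureLeja}(1), the underlying set of the section $E_{2^{n+1}}=(e_0,\dots,e_{2^{n+1}-1})$ is the full set of $2^{n+1}$-st roots of unity, so the previous step gives $\{\Re(e_k):0\le k\le 2^{n+1}-1\}=\mathcal{L}_{2^n}$.

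It then remains only to track the indices through the deletion of repeated values. The sequence $X$ lists, in order of first appearance, the distinct values of $k\mapsto\Re(e_k)$; by the previous step these values already exhaust $\mathcal{L}_{2^n}$ --- and produce nothing outside it --- once $k$ has run through $0,\dots,2^{n+1}-1$. Hence the first $2^n+1$ distinct values encountered are precisely the $2^n+1$ elements of $\mathcal{L}_{2^n}$, that is, the underlying set of $X_{2^n+1}=(x_0,\dots,x_{2^n})$ equals $\mathcal{L}_{2^n}$. I do not expect any genuine obstacle here beyond this bookkeeping: the only point to be careful about is that the deletion map from indices of $E$ to indices of $X$ is merely nondecreasing, but since a prefix of $E$ of length $2^{n+1}$ already yields all the required values and no spurious ones, no value outside $\mathcal{L}_{2^n}$ can occur among the first $2^n+1$ terms of $X$.
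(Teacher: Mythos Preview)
Your argument is correct and follows the same route as the paper, which presents the lemma as an immediate observation: the $2^{n+1}$-Leja section consists of all $2^{n+1}$-st roots of unity (Theorem~\ref{th:StructureLeja}(1)), whose real parts are exactly the $2^n+1$ points of $\mathcal{L}_{2^n}$, so the first $2^n+1$ distinct real parts are precisely $\mathcal{L}_{2^n}$. You have simply spelled out the bookkeeping that the paper leaves implicit.
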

Theorem \ref{thm:structure-projection} below gives two descriptions of $\Re$-Leja sequences. The first one is particularly adapted to the computations of $\Re$-Leja sequences when one is given  Leja sequences. Examples of easily computable (and explicit) Leja sequences can be found in \cite[Lemma 2]{jpcpvm}. In Figure \ref{fig:RLejapoints} (I), we show the first $16$ points of a Leja sequence $E$ and the first $9$ points of the corresponding $\Re$-Leja sequence $X(E)$. (The Leja sequence we use is given by the rule $E_2=(1,-1)$ and $E_{2^{n+1}}=(E_{2^n}, \exp(i\pi/2^n)E_{2^n})$.)

\smallskip

The concatenation of tuples is denoted by $\wedge$, 
$$(x_1,\dots,x_m)\wedge(y_1,\dots,y_n):=(x_1,\dots,x_m, y_1,\dots,y_n).$$
For every sequence of complex numbers $S=(s_k\,:\, k\in \mathbb{N})$ we define $S(j:k):=(s_j, s_{j+1}, \dots, s_k)$. As before, $S_k:=S(0:k-1)$.

\begin{theorem}\label{thm:structure-projection}
 A sequence $X=(x_k\, :\, k\in \mathbb{N})$ is a $\Re$-Leja sequence if and only if there exists a Leja sequence $E=(e_k\, :\, k\in \mathbb{N})$ such that
\begin{equation}\label{eq:cobaRLeja} X=(1,-1)\; \wedge \; \bigwedge_{j=1}^{\infty} \Re\Big(E(2^j:2^j+2^{j-1}-1)\Big).\end{equation}
Equivalently, $x_k=\Re(e_{\phi(k)})$, $k\in\mathbb{N}$, with $\phi(0)=0$, $\phi(1)=1$ and
\begin{equation}\label{eq:closedRLeja} \phi(k)=\begin{cases} \frac{3k}{2} -1 & k=2^n \\ 2^{\lfloor \log_2(k)\rfloor}+k-1 & k\neq 2^n \end{cases}, \quad k\geq 2, \end{equation}
where $\lfloor \cdot\rfloor$ is used for the ordinary floor function. 
\end{theorem}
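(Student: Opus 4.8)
\emph{Proof sketch.} The plan is to pin down exactly which of the numbers $\Re(e_0),\Re(e_1),\dots$ are ``new'' (have not occurred among the earlier ones), and then to read off both formulas from this. First I would use the dyadic block structure of Theorem~\ref{th:StructureLeja}: for $n\geq 1$ the entries $e_{2^n},\dots,e_{2^{n+1}-1}$ form $\rho E^{(1)}_{2^n}$ with $\rho$ a $2^n$-th root of $-1$ and $E^{(1)}_{2^n}$ a $2^n$-Leja section, so this block runs exactly through the $2^{n+1}$-th roots of unity that are not $2^n$-th roots of unity. Consequently each value of $\mathcal{L}_{2^n}\setminus\mathcal{L}_{2^{n-1}}$ (there are $2^{n-1}$ of them) is the real part of precisely two entries of the block, the two conjugate roots realizing that cosine.

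The heart of the argument is the claim that the first half $E(2^n:2^n+2^{n-1}-1)=\rho E^{(1)}_{2^{n-1}}$ of the block already hits each of these $2^{n-1}$ new values exactly once. Applying the second assertion of Theorem~\ref{th:StructureLeja} to $E^{(1)}_{2^n}$ shows that its first $2^{n-1}$ entries form a $2^{n-1}$-Leja section, hence (first assertion) their underlying set is the complete set of $2^{n-1}$-th roots of unity; so the underlying set of $E(2^n:2^n+2^{n-1}-1)$ is $\rho$ times that set. Writing $\rho=\exp(i\pi(2l+1)/2^n)$ and the elements as $\exp(i\pi(2l+1+4m)/2^n)$, $0\leq m<2^{n-1}$, I would check that two of them have equal real part iff their (odd) numerators agree modulo $2^{n+1}$ up to sign: the ``$+$'' alternative forces $m_1=m_2$, and the ``$-$'' alternative is impossible because the sum of two such numerators is $\equiv 2\pmod 4$. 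Thus the $2^{n-1}$ real parts of the first half are pairwise distinct, all of the form $\cos(j\pi/2^n)$ with $j$ odd, and therefore exhaust $\mathcal{L}_{2^n}\setminus\mathcal{L}_{2^{n-1}}$. I expect this congruence bookkeeping to be the only genuinely delicate point; the rest is organization.

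Granting the claim, \eqref{eq:cobaRLeja} follows by induction on $n$: if the real parts met among $e_0,\dots,e_{2^n-1}$ are exactly $\mathcal{L}_{2^{n-1}}$ (base case $e_0=1$, $e_1=-1$), then in the block $[2^n,2^{n+1})$ the first half contributes the $2^{n-1}$ truly new values filling $\mathcal{L}_{2^n}\setminus\mathcal{L}_{2^{n-1}}$ once each, while the second half only repeats values of that same set; so the real parts met among $e_0,\dots,e_{2^{n+1}-1}$ are exactly $\mathcal{L}_{2^n}$, and eliminating repeats keeps $e_0$, $e_1$, and from each block $[2^j,2^{j+1})$ exactly the indices $2^j,\dots,2^j+2^{j-1}-1$. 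That is the right-hand side of \eqref{eq:cobaRLeja}, and it re-proves Lemma~\ref{th:RLareGL} along the way.

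Finally I would extract \eqref{eq:closedRLeja} from \eqref{eq:cobaRLeja} by counting positions: the $j$-th concatenated block ($j\geq 1$), of length $2^{j-1}$, occupies positions $2^{j-1}+1,\dots,2^j$ of $X$, and position $2^{j-1}+1+t$ carries $\Re(e_{2^j+t})$; solving $k=2^{j-1}+1+t$ for the index gives $\phi(k)=2^{j-1}+k-1$ with $2^{j-1}=2^{\lfloor\log_2 k\rfloor}$ when $2^{j-1}<k<2^j$ (i.e. $k$ not a power of $2$), and $\phi(2^j)=2^j+2^{j-1}-1=\frac{3}{2}\,2^j-1$ at $k=2^j$. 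For the equivalence in the statement, the ``only if'' direction is exactly the above (carried out for a Leja sequence $E$ with $X=X(E)$), and the ``if'' direction is immediate: applying the same computation to the Leja sequence $E$ appearing in \eqref{eq:cobaRLeja} yields $X(E)$ equal to the right-hand side of \eqref{eq:cobaRLeja}, hence $X=X(E)$ is a $\Re$-Leja sequence.
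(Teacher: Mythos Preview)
Your argument is correct and follows essentially the same route as the paper's proof: both use the dyadic block structure from Theorem~\ref{th:StructureLeja} and a parity argument on the arguments of the relevant roots of unity to show that the first half $E(2^n:2^n+2^{n-1}-1)$ of each block contributes exactly the $2^{n-1}$ values of $\mathcal{L}_{2^n}\setminus\mathcal{L}_{2^{n-1}}$, then count to conclude the second half repeats. The only organizational difference is that the paper verifies novelty pointwise (showing $\overline{e_k}\notin E_k$ for each $k$ in the first half), while you first show the real parts of the first half are pairwise distinct and lie in $\mathcal{L}_{2^n}\setminus\mathcal{L}_{2^{n-1}}$, then invoke the induction hypothesis; your congruence computation on the odd numerators is the same parity obstruction the paper phrases as $\arg(\rho/\overline\rho)\neq\arg(\overline b/a)$.
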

\begin{proof} Let $X=X(E)$ with $E=(e_s\,:\, s\in \NN)$. We prove that if $2^j\leq k <2^j+2^{j-1}$ then $\Re(e_k)$ does not appear in $\Re (E_k)$ and therefore provides a new point for $X$. To do that, since $e_k$ itself does not belong to $E_k$, it suffices to check that $\overline{e_k}$ is not a point of $E_k$, equivalently $e_k\neq \overline{e_s}$, $0\leq s \leq k-1$. If $s<2^j$ then $\overline{e_s}$ is a $2^j$-th root of unity whereas $e_k$ is not. On the other hand, if $2^j\leq s\leq k-1$ then, in view of Theorem \ref{th:StructureLeja}, $e_k=\rho a$ and $e_s=\rho b$ where $\rho$ is a $2^j$-th root of $-1$ and both $a$ and $b$ are $2^{j-1}$-st roots of unity. The relation $e_k=\overline{e_s}$ yields $\rho/\overline{\rho}=\overline{b}/a$. The argument of the first number is of the form $2(2l+1)\pi/2^j$ and the argument of the second one is $2t\pi/2^{j-1}$ (with $l, \, t\in \ZZ$). Equality is therefore impossible.  
\par
Now, in view of Lemma \ref{th:RLareGL}, from $E_{2^{j+1}}$ we obtain $2^{j}+1$ points for $X$, namely the points in $\mathcal{L}_{2^j}$ arranged in a certain way. Yet, the $2^{j}+1$ first points of $X$ are already given by $E_{2^j}$ ($2^{j-1}+1$ points) together with, according to the first part of this proof, the $2^{j-1}$ points $\Re(e_k)$, $2^j\leq k <2^j+2^{j-1}$. This implies that if $2^j+2^{j-1}\leq k <2^{j+1}$ then $\Re(e_k)$ is not a new point for $X$. This achieves the proof of \eqref{eq:cobaRLeja}.

\par To prove \eqref{eq:closedRLeja} we observe that, in view of \eqref{eq:cobaRLeja}, we have $$\Re(e_{2^k+i}) = x_{2^{k-1}+i+1},\quad  0\leq i \leq 2^{k-1}-1.$$
Hence $\phi(2^{k-1}+i+1)=2^k+i$ and the expression for $\phi$ easily follows. 
 \end{proof}

\begin{corollary}[to the proof]\label{cor:structure.proj} 
If $X=X(E)$ then $X(2^n+1:2^{n+1})=\Re(E(2^{n+1}:2^{n+1}+2^{n}-1))$. 
\end{corollary} 

Decompositions \eqref{eqn:define.rho} and  \eqref{eq:cobaRLeja} are fundamental to this work. In particular, the binary expansion of $k$ will be used in the study of the tuple $X(0:k)$.   
\par
Note that, of course, the decomposition would be different if we projected the Leja points on another segment, say on $[-e^{i\theta}, e^{i\theta}]$. The distribution of the projected points in general depends on arithmetic properties of $\theta$. We shall not discuss the general case in this paper.
\par
Finally, let us point out that our $\Re$-Leja sequences are not Leja sequences for the interval. It can be shown that they are pseudo Leja sequences (see \cite{biacal} for the definition of pseudo Leja sequences). There is no known expression for Leja points for the interval. For that matter, such an expression is very unlikely to exist.  

\begin{figure}[htbp]
	\centering
	\begin{tabular}{p{0.47\textwidth}p{0.47\textwidth}}
		\begin{center}\includegraphics[width=0.45\textwidth]{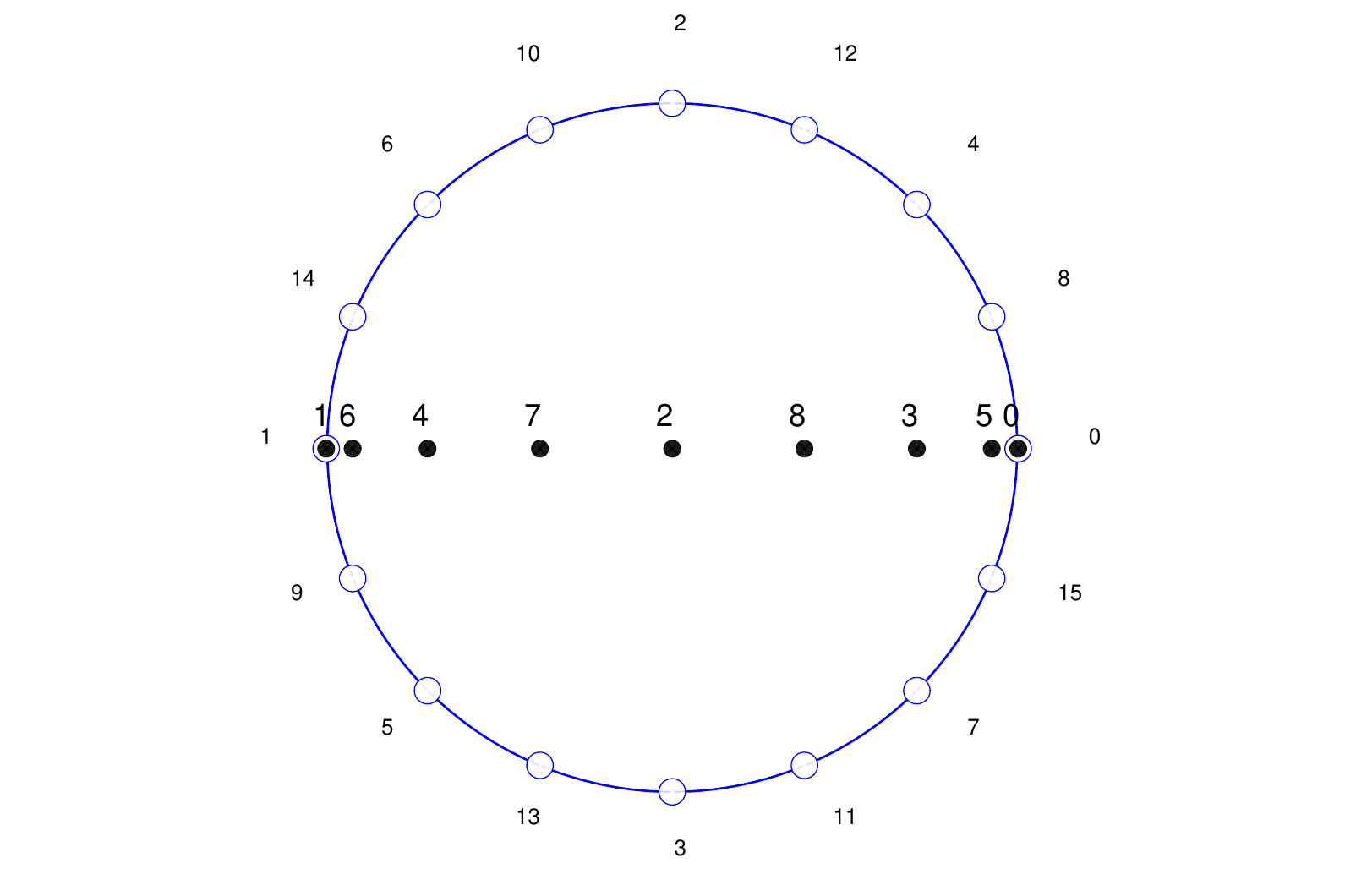}\end{center} & 	\begin{center}\includegraphics[width=0.45\textwidth]{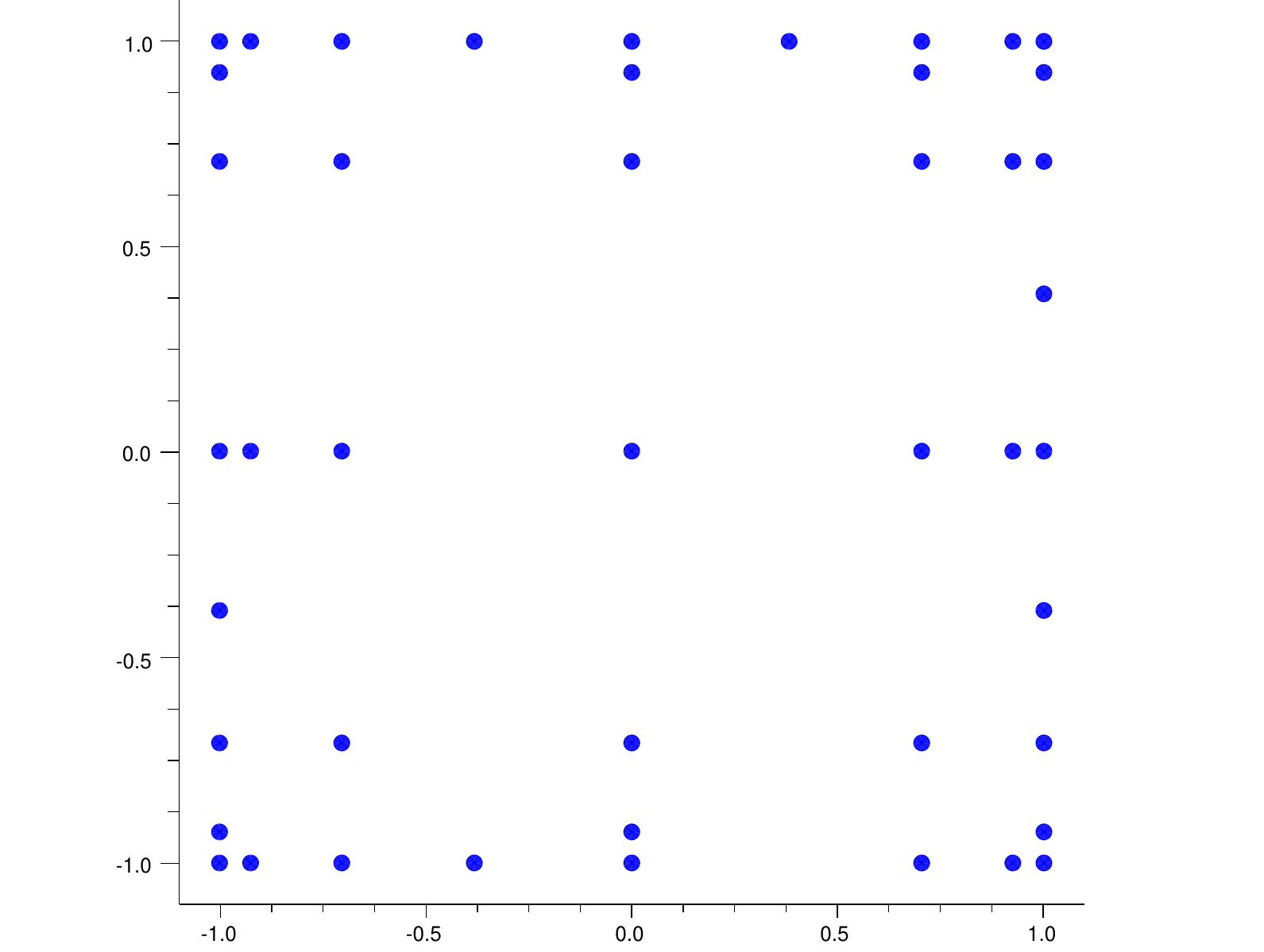}\end{center} \\
	\begin{footnotesize}	(I) First $9$ points of a $\Re$-Leja sequence. \end{footnotesize}& \begin{footnotesize}(II) $45$ interpolation points obtained as the intertwining of the points in (I) with themselves. \end{footnotesize}\\ 
		\end{tabular}
	\caption{Exemples of points from a $\Re$-Leja sequence and their intertwining.} \label{fig:RLejapoints}
\end{figure}
\section{Lebesgue constants of $\Re$-Leja sequences}
\subsection{The upper bound and its consequences} 
Recall that given a set $A$ of $n+1$ interpolation points in $[-1,1]$ and $f\in \mathrm{C}^s([-1,1])$, the Lebesgue inequality together with the Jackson theorem \cite[Theorem 1.5]{rivlin} yield the well known estimate 
\begin{equation*} \max_{[-1,1]} \|f-\mathbf{L}[A;f]\| = M (1+\Delta(A)) \; \omega(f^{(s)}, 2/n)/n^s\end{equation*}
where $\omega(f^{(s)}, \cdot)$ denotes the modulus of continuity of $f^{(s)}$ and $M$ does not depend either on $A$ or $n$.  The following theorem implies in particular that interpolation polynomials at the points of any $\Re$-Leja sequence converge uniformly on $[-1,1]$ to the interpolated function as soon as it belongs to $\mathrm{C}^4([-1,1])$. A weaker consequence is that the discrete measure $\mu_d:=\frac{1}{d+1}\sum_{i=0}^d [x_i]$ associated to the $\Re$-Leja sequence $X=(x_s\,:\, s\in \NN)$ weakly converges to the `$\arcsin$' distribution on $[-1,1]$ which is the equilibrium measure of the interval. Here $[x_i]$ denotes the Dirac measure at $x_i$. 

\begin{theorem}\label{thm:upper.bound}
 Let $X$ be a $\Re$-Leja sequence. The Lebesgue constant $\Delta(X_k)$ for the interpolation points $x_0, \dots, x_{k-1}$ satisfies the following estimate
\begin{equation*} \Delta(X_k)=O(k^3\log k), \quad k\to\infty. \end{equation*} \end{theorem}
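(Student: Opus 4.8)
The plan is to bound $\sum_{a\in X_k}|\ell(X_k,a;z)|$ uniformly in $z\in[-1,1]$ by splitting the sum according to the binary decomposition of $k$ and exploiting the fact, recorded in Lemma~\ref{th:RLareGL} and Corollary~\ref{cor:structure.proj}, that each block of the $\Re$-Leja sequence fills out a full Chebyshev–Lobatto set $\mathcal{L}_{2^{n}}$. Write $k = 2^{n_0}+2^{n_1}+\dots+2^{n_r}$ with $n_0>n_1>\dots>n_r\ge 0$, so that, reading off \eqref{eq:cobaRLeja}, the tuple $X_k$ is a concatenation of $O(\log k)$ blocks, the $j$-th of which is $\Re$ of a piece of the Leja sequence lying inside (a rotate of) the $2^{n_{j-1}}$-th roots of unity. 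The first and principal step is therefore to understand the single-block case: I would estimate $|\ell(\mathcal{L}_{2^n},a;z)|$ for $a\in\mathcal{L}_{2^n}$ and $z\in[-1,1]$. The Chebyshev–Lobatto FLIPs are classical and one has the well-known bound $\Delta(\mathcal{L}_{2^n})=O(n)$ (the Lebesgue constant of the arcsine/Chebyshev extremal points grows logarithmically in the number of points, hence linearly in $n$ here); more precisely I would need the pointwise estimate that controls $|\ell(\mathcal{L}_{2^n},a;z)|$ in terms of the distance from $z$ to $a$, of the type $|\ell(\mathcal{L}_{2^n},a;z)|\le M\,\min\{1,\ 2^{-n}/|z-a|\}$ or a variant expressible through $w_{\mathcal{L}_{2^n}}(z)=c_n(T_{2^n}(z)^2-1)$ and $w_{\mathcal{L}_{2^n}}'(a)$.

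The second step is to pass from $\ell(\mathcal{L}_{2^{n_{j-1}}},\cdot;\cdot)$ to the actual FLIPs $\ell(X_k,a;\cdot)$. Since $X_k$ is the concatenation of the blocks $B_0,\dots,B_r$ with underlying sets $S_0\supsetneq$ (parts of) $\mathcal{L}_{2^{n_{j}}}$, we have $w_{X_k}=\prod_{j} w_{B_j}$, and for $a$ in block $B_{j_0}$,
\begin{equation*}
\ell(X_k,a;z)=\ell(B_{j_0},a;z)\cdot \prod_{j\ne j_0}\frac{w_{B_j}(z)}{w_{B_j}(a)}.
\end{equation*}
So one must control the correction factors $w_{B_j}(z)/w_{B_j}(a)$ for $z\in[-1,1]$ and $a$ a Chebyshev–Lobatto point of a finer or coarser grid. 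The numerator $|w_{B_j}(z)|$ is bounded above on $[-1,1]$ using $|T_{2^{n}}|\le 1$ there; the denominator $|w_{B_j}(a)|$ is bounded below because $a$ is a node of one Chebyshev–Lobatto grid and $B_j$ sits on another, so $a$ cannot be too close to $B_j$ — the minimal separation between $\mathcal{L}_{2^m}$ and $\mathcal{L}_{2^{m'}}\setminus\mathcal{L}_{2^m}$, or between a point and a rotated root-of-unity projection, is of order $2^{-\max(m,m')}$ up to a possible boundary worsening near $\pm1$. Combining the single-block bound from Step~1 with these $O(\log k)$ correction factors, each of which I expect to be at most polynomially large in $k$ (roughly $O(k)$ or $O(k^2)$ coming from the worst node–block proximity, especially near the endpoints where Chebyshev spacing degenerates quadratically), and summing over the $O(k)$ nodes, yields a bound of the shape $O(k^a\log k)$; the exponent $a=3$ should emerge from: one power of $k$ from the block Lebesgue constant / summation over nodes, and two further powers from the endpoint-proximity correction factors accumulated over the $\log k$ blocks.

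The main obstacle, and the technical heart of the argument, is the lower bound on the correction denominators $|w_{B_j}(a)|$ near the endpoints $\pm 1$: when $z$ or $a$ is within $O(2^{-2m})$ of $\pm1$, the factor $T_{2^m}^2-1$ behaves like a quadratic, and one must track how close a node of one grid can come to another grid there, since a naive bound would give an exponential (in the number of blocks) loss. The resolution is to observe that all the blocks live on nested Chebyshev–Lobatto grids $\mathcal{L}_{2^{n_0}}\supset\mathcal{L}_{2^{n_1}}\supset\cdots$ together with the rotated pieces, and that a point of $\mathcal{L}_{2^{m}}$ distinct from all points of $\mathcal{L}_{2^{m'}}$ is at distance at least $\asymp 2^{-2\max(m,m')}$ from that finer grid — a uniform, not exponentially deteriorating, bound — so the product over the $\le \log_2 k$ blocks contributes at most $k^{O(1)}$ rather than $2^{O(\log k)}$ with a bad constant. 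Once this arithmetic-geometric separation estimate on Chebyshev–Lobatto grids is in hand, the rest is the routine bookkeeping of multiplying $O(\log k)$ polynomially-bounded factors and summing $O(k)$ terms, which I would not grind through here.
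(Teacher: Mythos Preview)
Your general framework --- partitioning $X_k$ into blocks and writing $\ell(X_k,a;\cdot)=\ell(B_{j_0},a;\cdot)\prod_{j\ne j_0}w_{B_j}/w_{B_j}(a)$ --- is exactly the algebraic device the paper uses (its Lemma~\ref{th:alg_rel}). But two substantive pieces are missing, and the second one is a genuine gap.

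First, the blocks are not what you say they are. Only the initial segment $X(0:2^n)$ is a Chebyshev--Lobatto set $\mathcal L_{2^n}$. The later blocks $B_i=\Re\big(E(d_i:d_{i+1}-1)\big)$ are \emph{not} subsets of any $\mathcal L_{2^m}$; they are the \emph{modified Chebyshev points}
\[
\mathcal T^{(\gamma_i)}_{2^{n_{i+1}}}=\{x:\,T_{2^{n_{i+1}}}(x)=T_{2^{n_{i+1}}}(\cos\gamma_i)\},\qquad \gamma_i=\beta_0+\dots+\beta_i,\ \ 2^{n_j}\beta_j\equiv\pi\ (\mathrm{mod}\ 2\pi),
\]
so that $w_{B_i}(x)=T_{2^{n_{i+1}}}(x)-T_{2^{n_{i+1}}}(\cos\gamma_i)$. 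Your ``nested Chebyshev--Lobatto grids'' picture and the corresponding separation heuristic therefore do not apply as stated. (In particular, the single-block Lebesgue constant you need is not that of $\mathcal L_d$ but of $\mathcal T^{(\gamma)}_d$, which the paper bounds separately by $O(\log d/|\sin d\gamma|)$.)

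Second --- and this is the real obstacle --- your proposed cure for the exponential loss does not work. A pointwise separation bound of the type $\mathrm{dist}(a,B_j)\gtrsim 2^{-2m}$ controls only one factor of the product $|w_{B_j}(a)|=\prod_{b\in B_j}|a-b|$; it says nothing about the product itself, and a naive use would still give a loss exponential in $|B_j|$. What the paper actually does is exploit the closed form $w_{B_i}=T_{2^{n_{i+1}}}-\text{const}$: with $a=\cos\varphi$, the whole product $\prod_i|w_{B_i}(a)|^{-1}$ becomes, after the sum-to-product identity, a product of reciprocal sines
\[
\prod_i\big|\sin 2^{n_{i+1}-1}(\varphi\pm\gamma_i)\big|^{-1},
\]
and this is bounded \emph{globally}, not factor by factor, by a telescoping trigonometric inequality (Lemma~\ref{lem:ineq+}) built on $|\sin\alpha|\ge 2^{-n}|\sin 2^n\alpha|$ and the double-angle formula. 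That lemma turns the product over $r$ blocks into a single factor $|\cos 2^{n_0-1}\varphi|^{-1}\cdot 2^{n_0-n_r}$, which is $O(2^n)$ at the relevant angles. This telescoping step is precisely the ``missing idea'' in your sketch; without it, the bookkeeping you defer cannot close with a polynomial bound.
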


The construction of good multivariate interpolation points is derived as follows. We start from $N$ $\Re$-Leja sequences  $X^{(j)}=(x^{(j)}_k: k\in \NN)$, $j=1,\dots,N$.  These $N$ sequences need not be distinct. We define $\mathbf{P}_k\subset [-1,1]^N$ as
\begin{equation*}
\mathbf{P}_k=\Big\{x_{\alpha}=(x^{(1)}_{\alpha_1},\ldots,x^{(N)}_{\alpha_N}): \sum_{j=1}^{N}\alpha_j\leq k \Big\}. 
\end{equation*}
It is known \cite{calvi} that this is a valid set for interpolation by $N$-variables polynomials of degree at most $k$. Actually, $\mathbf{P}_k$ is the underlying set of the \emph{intertwining} of the univariate tuples $X^{(j)}(0:k)$, $j=1,\ldots, N$.
We refer to \cite{jpcpvm} for details on this definition and to \cite{calvi} for a general discussion of the intertwining process. Let us just emphasize that, in order to provide good points, the method requires to use sequences of interpolation points (we add one point when we go from degree $k$ to $k+1$) rather than arrays (the classical case : all the points change when we change degree). The reason for this is explained in \cite{jpcpvm}. \par In Figure \ref{fig:RLejapoints} (II), we show the points of a set $\mathbf{P}_8$ constructed with the first $9$ points of the $\Re$-Leja sequence in (I). 

\begin{theorem}\label{th:lebestmult} The Lebesgue constant $\Delta(\mathbf{P}_k)$ grows 
at most like a polynomial in $k$ as $k\to\infty$. \end{theorem}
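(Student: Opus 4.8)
The plan is to reduce the statement to the one-dimensional estimate of Theorem~\ref{thm:upper.bound} by the same intertwining argument used in \cite{jpcpvm}; I only describe the mechanism. Write $\mathbf{L}_k:=\mathbf{L}[\mathbf{P}_k;\cdot]$ and, for each $j\in\{1,\dots,N\}$ and each $m\geq 0$, let $L^{(j)}_m$ denote univariate Lagrange interpolation at the first $m$ points of $X^{(j)}$, acting on the $j$-th coordinate, with the convention $L^{(j)}_0:=0$. Since the intertwining is associative and $\mathbf{P}_k$ is indexed by $\{\alpha\in\NN^N:|\alpha|\leq k\}$, the operator $\mathbf{L}_k$ admits the blending (Boolean-sum) representation
\[
 \mathbf{L}_k=\sum_{i=0}^{k}\bigl(L^{(1)}_{i+1}-L^{(1)}_{i}\bigr)\circ \mathbf{L}^{(2,\dots,N)}_{k-i},
\]
where $\mathbf{L}^{(2,\dots,N)}_{m}$ is the interpolation operator of the intertwining of $X^{(2)},\dots,X^{(N)}$ at degree $m$ and the two factors act on disjoint blocks of variables. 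I would first check this identity by the standard short computation: the right-hand side is a polynomial of total degree $\leq k$, and when it is evaluated at a node $x_\alpha$ of $\mathbf{P}_k$ only the summands with $i\leq\alpha_1$ contribute, since $L^{(1)}_{i+1}-L^{(1)}_{i}$ kills the data at $x^{(1)}_0,\dots,x^{(1)}_{\alpha_1}$ as soon as $i>\alpha_1$; those surviving summands telescope to $L^{(1)}_{\alpha_1+1}$ applied to $f(\cdot,x^{(2)}_{\alpha_2},\dots,x^{(N)}_{\alpha_N})$, which returns $f(x_\alpha)$. Unisolvence of $\mathbf{P}_k$ then forces $\mathbf{L}_k$ to coincide with this operator.

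Next I would take operator norms over $C([-1,1]^N)$. The elementary bound $\|A\circ B\|\leq\|A\|\,\|B\|$ for operators acting on complementary groups of variables over a product set turns the representation into
\[
 \Delta(\mathbf{P}_k)=\|\mathbf{L}_k\|\leq\sum_{i=0}^{k}\bigl(\Delta(X^{(1)}_{i+1})+\Delta(X^{(1)}_{i})\bigr)\,\Delta\bigl(\mathbf{P}^{(2,\dots,N)}_{k-i}\bigr),
\]
where $\Delta(X^{(1)}_{m})$ denotes the Lebesgue constant of the first $m$ points of $X^{(1)}$. Iterating this inequality over the $N$ coordinates and bounding trivially the number of terms, one gets an estimate of the shape
\[
 \Delta(\mathbf{P}_k)\leq\sum_{|\alpha|\leq k}\ \prod_{j=1}^{N}\bigl(\Delta(X^{(j)}_{\alpha_j+1})+\Delta(X^{(j)}_{\alpha_j})\bigr).
\]

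Now I would insert Theorem~\ref{thm:upper.bound}. Since there are only finitely many sequences $X^{(j)}$, there is a constant $M$, the same for all $j$, with $\Delta(X^{(j)}_m)\leq M\,m^{3}\log m$ for $m\geq 2$ and $\Delta(X^{(j)}_m)\leq M$ for $m\leq 1$. Hence every factor in the product above is at most $M'(k+1)^{3}\log(k+1)$, while the number of multi-indices $\alpha$ with $|\alpha|\leq k$ equals $\binom{k+N}{N}=O(k^{N})$. Therefore
\[
 \Delta(\mathbf{P}_k)=O\!\bigl(k^{N}(k^{3}\log k)^{N}\bigr)=O\!\bigl(k^{4N}(\log k)^{N}\bigr),
\]
which is at most polynomial in $k$, as claimed. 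A smaller exponent can be obtained by keeping the convolution sums $\sum_i \alpha_j^{3}(k-\alpha_j)^{3}$ instead of bounding each factor by its maximum, but this refinement is not needed.

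I do not expect a genuinely new difficulty at this stage: all the analytic content of the theorem is already in Theorem~\ref{thm:upper.bound}, and the passage to $N$ variables is, verbatim, the one in \cite{jpcpvm} once the univariate points are $\Re$-Leja sequences -- which is exactly why the authors call it identical and do not reproduce it. The only things requiring care are organisational, namely getting the indices right in the blending recursion, justifying the cross-norm inequality on $[-1,1]^N$, and checking that the constants depend on $N$ only through a fixed power of $k$; all of these are settled in \cite{jpcpvm} and \cite{calvi}.
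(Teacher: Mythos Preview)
Your proposal is correct and follows precisely the approach the paper invokes: the blending (Boolean-sum) decomposition of $\mathbf{L}_k$ into a sum of tensor products of univariate difference operators, followed by the cross-norm bound and iteration over the $N$ coordinates, is exactly the content of \cite[Theorem~16]{jpcpvm} to which the paper's one-line proof defers. Your explicit exponent $4N$ differs from the paper's $(N^2+11N-6)/2$ only because you bound the convolution sums more crudely, but the theorem asserts merely polynomial growth, so this is immaterial.
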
  
\begin{proof} The proof of \cite[Theorem 16]{jpcpvm} works as well in this case.  \end{proof}
Just as in the univariate case, the multivariate versions of the Lebesgue inequality and the Jackson theorem together with Theorem \ref{th:lebestmult} imply that Lagrange interpolants of sufficiently smooth functions converge uniformly. Examining the terms in the proof of \cite[Theorem 16]{jpcpvm}, we find
\begin{equation*}
\Delta(\mathbf{P}_k)=O\Big(k^{(N^2+11N-6)/2}\log ^N k\Big), \quad k\to\infty,
\end{equation*}
which gives a more precise idea of the required level of smoothness. 
This bound however is certainly pessimistic. 
\subsection{Outline of the proof of Theorem \ref{thm:upper.bound}} We take advantage of the structure of the points of a $\Re$-Leja sequence. The first step is a simple algebraic observation. We use the notation recalled at the end of the introduction. 
\begin{lemma}\label{th:alg_rel} Let $N=N_0\cup \cdots \cup N_{s-1}$ where the $N_i$ form a partition of the finite set $N\subset \RR$. We have 
\begin{equation}\label{eq:relflip} \ell(N, a; \cdot)= \frac{w_{N\setminus N_i}}{w_{N\setminus N_i}(a)} \ell (N_i,a; \cdot), \quad a\in N_i,\quad i=0,\dots,s-1.\end{equation}
Consequently, 
\begin{equation}\label{eq:rellebconst} \Delta(N) \leq \sum_{i=0}^{s-1} \max_{x\in K,\, a\in N_i}\left|\frac{w_{N\setminus N_i}(x)}{w_{N\setminus N_i}(a)}\right|\; \Delta(N_i),\end{equation} 
where the Lebesgue constants are computed with respect to a compact set $K$ containing $N$.
\end{lemma}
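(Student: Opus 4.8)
The plan is to obtain \eqref{eq:relflip} by a direct factorisation of the fundamental Lagrange interpolation polynomials, and then to read off \eqref{eq:rellebconst} by summing absolute values block by block.

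First I would use that, since $N_i$ and $N\setminus N_i$ partition $N$, the node polynomial factors as $w_N = w_{N_i}\,w_{N\setminus N_i}$. Fix $i$ and $a\in N_i$. In the product
$$\ell(N,a;\cdot)=\prod_{b\in N,\; b\neq a}\frac{\cdot-b}{a-b}$$
I would separate the factors indexed by $b\in N_i\setminus\{a\}$ from those indexed by $b\in N\setminus N_i$. The former product is exactly $\ell(N_i,a;\cdot)$, and the latter is $w_{N\setminus N_i}/w_{N\setminus N_i}(a)$; here $a\notin N\setminus N_i$, so $w_{N\setminus N_i}(a)\neq 0$ because the points of $N$ are pairwise distinct. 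This is \eqref{eq:relflip}.

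For the consequence, I would invoke the formula $\Delta(N)=\max_{x\in K}\sum_{a\in N}|\ell(N,a;x)|$ recalled in the introduction. Splitting the sum over $a\in N$ according to the blocks $N_i$ and substituting \eqref{eq:relflip}, for each $x\in K$ we get
$$\sum_{a\in N}|\ell(N,a;x)| = \sum_{i=0}^{s-1}\sum_{a\in N_i}\left|\frac{w_{N\setminus N_i}(x)}{w_{N\setminus N_i}(a)}\right|\,|\ell(N_i,a;x)| .$$
Bounding each ratio by $C_i:=\max_{x\in K,\,a\in N_i}\bigl|w_{N\setminus N_i}(x)/w_{N\setminus N_i}(a)\bigr|$ and using that $\sum_{a\in N_i}|\ell(N_i,a;x)|\leq\Delta(N_i)$ for every $x\in K$ (the inner Lebesgue constant being taken with respect to the same $K$, which contains $N\supset N_i$), I obtain $\sum_{a\in N}|\ell(N,a;x)|\leq\sum_{i=0}^{s-1}C_i\,\Delta(N_i)$; taking the maximum over $x\in K$ yields \eqref{eq:rellebconst}.

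This argument is entirely routine and I do not expect a genuine obstacle: the only points worth spelling out are the non-vanishing of $w_{N\setminus N_i}(a)$, which comes from the pairwise distinctness of the nodes, and the fact that all the Lebesgue constants involved are computed relative to the common compact set $K$. The whole proof should take only a few lines.
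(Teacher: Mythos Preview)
Your proof is correct and essentially matches the paper's: the paper verifies \eqref{eq:relflip} by checking that the right-hand side has the defining properties of $\ell(N,a;\cdot)$, which is equivalent to your direct factorisation of the product, and then derives \eqref{eq:rellebconst} exactly as you do from the formula for the Lebesgue constant.
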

\begin{proof} We readily check that the polynomial on the right hand side of \eqref{eq:relflip} satisfies the defining properties of $\ell(N,a;\cdot)$. The estimate for the Lebesgue constant $\Delta(N)$ follows from the definition together with the formula for the FLIPs. 
\end{proof}
Given a $\Re$-Leja sequence $X=X(E)$, to estimate $\Delta (X_k)$, $2^n+1<k\leq 2^{n+1}$, we shall first apply the lemma with a partition of $X_k$ into two subsets, namely $A=X(0:2^n)=\mathcal{L}_{2^n}$ and $B=X(2^n+1:k-1)$. Here and below, when there is no risk of misunderstanding, we confuse a tuple with its underlying set. In other words, when we write $T=Y$ with $T$ a tuple and $Y$ a set, we mean that $Y$ is formed of the entries of $T$. Our choice for $A$ and $B$, of course, is motivated by the fact that the Chebyshev-Lobatto points are excellent interpolation points for which there is a large amount of information available, see below. With this choice, Lemma \ref{th:alg_rel} gives
\begin{equation}\label{eq:rellebconst2}\Delta(X_k)\leq \Delta(A) \max_{x\in [-1,1],\, a\in A} \left|\frac{w_B(x)}{w_B(a)}\right|+ \Delta(B) \max_{x\in [-1,1],\, b\in B} \left|\frac{w_A(x)}{w_A(b)}\right|. \end{equation} 
The factors depending on the first subset $A$ in \eqref{eq:rellebconst2} will be easily estimated. The more difficult part will be to estimate $w_B$ and $\Delta_B$. To do that, we shall use a partition of $B$ and still have recourse to Lemma \ref{th:alg_rel} in its most general form. We point out however that our method is unlikely, it seems, to give the best estimates. Intuitively, we think that sharp estimates cannot be obtained by separating the interpolation points into two or more groups.\par It is not difficult to see that the Lebesgue constant $\Delta(X_k)$ cannot grow slower than $k$. This is explained below in subsection \ref{sec:lowerbound}.      
\subsection{Interpolation at Chebyshev-Lobatto points}
We collect a few results on Chebyshev-Lobatto points. First, since they are the extremal points of the Chebyshev polynomials, we have 
\begin{equation*} w_{\mathcal{L}_d}(x)=(x^2-1) d^{-1} T'_d(x), \quad x\in \mathbb{R}, \end{equation*}
where $T_d$ denotes the \emph{monic} Chebyshev polynomial of degree $d$. From $2^{d-1}T_d(\cos\theta)=\cos d\theta$ we readily find 
\begin{equation}\label{eq:wGLtrigform} w_{\mathcal{L}_d}(\cos \theta) = -2^{1-d} \sin \theta \sin d\theta,\quad \theta\in \mathbb{\RR}. \end{equation} 
A classical result of Ehlich and Zeller \cite{zeller, brutman} ensures that 
\begin{equation}\label{eq:lebCL}\Delta(\mathcal{L}_d)=O(\log d), \quad d\rightarrow\infty.  \end{equation}

\begin{lemma}\label{th:wCL} Let $X=X(E)$ be a $\Re$-Leja sequence. If $2^n+1<k\leq 2^{n+1}$, $A=X(0:2^n)$,  $B=X(2^n+1:k-1)$ and $K=[-1,1]$ then 
\begin{equation*} \max_{x\in K,\,b\in B}\frac{|w_A(x)|}{|w_A(b)|} \leq 1/\sin(\pi/2^{n+1}). \end{equation*} 
\end{lemma}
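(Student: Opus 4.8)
The plan is to use the explicit trigonometric form \eqref{eq:wGLtrigform} of $w_{\mathcal{L}_d}$ with $d=2^n$, together with the fact (Lemma \ref{th:RLareGL}) that the underlying set of $A=X(0:2^n)$ is exactly $\mathcal{L}_{2^n}$, so that $w_A=w_{\mathcal{L}_{2^n}}$. Writing $x=\cos\theta$ with $\theta\in[0,\pi]$, formula \eqref{eq:wGLtrigform} gives $|w_A(x)|=2^{1-2^n}|\sin\theta||\sin(2^n\theta)|\leq 2^{1-2^n}$, so the numerator is bounded by $2^{1-2^n}$ uniformly on $K=[-1,1]$. It remains to bound the denominator $|w_A(b)|$ from below for $b\in B$.

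The key point is that, by Corollary \ref{cor:structure.proj} (or directly from \eqref{eq:cobaRLeja}), the entries of $B=X(2^n+1:k-1)$ are real parts of points $e_s$ with $2^{n+1}\leq s<2^{n+1}+2^n$; by Theorem \ref{th:StructureLeja}(2), such $e_s$ equals $\rho a$ with $\rho$ a $2^n$-th root of $-1$ and $a$ a $2^n$-th root of unity — hence $e_s$ is a $2^{n+1}$-st root of unity that is \emph{not} a $2^n$-th root of unity. Consequently each $b\in B$ has the form $b=\cos\theta$ with $\theta=(2m+1)\pi/2^{n+1}$ for some integer $m$, i.e. $b\in\mathcal{L}_{2^{n+1}}\setminus\mathcal{L}_{2^n}$. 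For such $\theta$ we have $\sin(2^n\theta)=\sin((2m+1)\pi/2)=\pm1$, so $|w_A(b)|=2^{1-2^n}|\sin\theta|$, and since $\theta$ is an odd multiple of $\pi/2^{n+1}$ lying in $(0,\pi)$, the quantity $|\sin\theta|$ is minimized when $\theta$ is closest to $0$ or $\pi$, namely $\theta=\pi/2^{n+1}$ or $\theta=(2^{n+1}-1)\pi/2^{n+1}$, giving $|\sin\theta|\geq\sin(\pi/2^{n+1})$. Therefore $|w_A(b)|\geq 2^{1-2^n}\sin(\pi/2^{n+1})$.

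Combining the two bounds yields
\begin{equation*}
\max_{x\in K,\,b\in B}\frac{|w_A(x)|}{|w_A(b)|}\leq\frac{2^{1-2^n}}{2^{1-2^n}\sin(\pi/2^{n+1})}=\frac{1}{\sin(\pi/2^{n+1})},
\end{equation*}
as claimed. The only mild subtlety — the part I would be most careful about — is the identification of the points of $B$ as Chebyshev-Lobatto points of level $n+1$ that are not of level $n$: one must invoke Theorem \ref{th:StructureLeja}(2) to pin down the arguments of the relevant $e_s$ and check that the resulting $\cos\theta$ never coincides with a value in $\mathcal{L}_{2^n}$ (equivalently, that $\theta$ is genuinely an odd multiple of $\pi/2^{n+1}$), so that no $b\in B$ can make $\sin(2^n\theta)$ vanish. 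Everything else is the elementary estimate $\min|\sin\theta|=\sin(\pi/2^{n+1})$ over the admissible arguments.
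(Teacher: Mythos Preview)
Your approach is essentially the paper's own: use \eqref{eq:wGLtrigform} for $w_A=w_{\mathcal{L}_{2^n}}$, bound the numerator by $2^{1-2^n}$, and then show that every $b\in B$ has the form $\cos\beta$ with $\beta$ an odd multiple of $\pi/2^{n+1}$, whence $|\sin 2^n\beta|=1$ and $|\sin\beta|\geq\sin(\pi/2^{n+1})$. The structure and the final computations are correct.

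There is, however, an index slip in your identification of the points of $B$, and it is exactly the place you yourself flagged as delicate. For $b\in B$ you invoke Corollary~\ref{cor:structure.proj} to get $b=\Re(e_s)$ with $2^{n+1}\leq s<2^{n+1}+2^n$, and then appeal to Theorem~\ref{th:StructureLeja}(2). But that theorem, applied to $E_{2^{n+2}}=(E_{2^{n+1}},\rho\,E^{(1)}_{2^{n+1}})$, gives $\rho$ a $2^{n+1}$-st root of $-1$ (not a $2^n$-th root of $-1$), and the first $2^n$ entries of $E^{(1)}_{2^{n+1}}$ are the $2^n$-th roots of unity. Thus $e_s$ is a $2^{n+2}$-nd root of unity that is not a $2^{n+1}$-st root --- not, as you wrote, a $2^{n+1}$-st root that is not a $2^n$-th root. (Indeed, had $e_s$ been a $2^{n+1}$-st root of unity its real part would lie in $\mathcal{L}_{2^n}=A$, contradicting $b\in B$.) With the correct decomposition one gets $\beta=(2\tau+1)\pi/2^{n+1}+2\tau'\pi/2^n=(2m+1)\pi/2^{n+1}$, which is precisely the formula you then use. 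So your conclusion $\theta=(2m+1)\pi/2^{n+1}$ and the remainder of the argument are right; only the intermediate sentence about $\rho$ and about which roots of unity $e_s$ belongs to needs to be shifted up by one power of $2$.
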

\begin{proof} If $x=\cos t$ and $b=\cos \beta$ then, since $A=\mathcal{L}_{2^n}$, in view of \eqref{eq:wGLtrigform}, we have 
\begin{equation*}|w_A(x)/w_A(b)| =  |\sin t \sin(2^n t)|/|\sin \beta \sin(2^n\beta)| \leq 1 / |\sin \beta \sin(2^n\beta)|. \end{equation*} 
Hence, it suffices to check that \begin{equation}\label{eq:ineqsin} |\sin \beta \sin(2^n\beta)|\geq \sin(\pi/2^{n+1}).\end{equation} Since $b=x_j$ with $2^n+1\leq j \leq k-1<2^{n+1}$, equation \eqref{eq:closedRLeja} gives
\begin{equation*} x_j= \Re(e_{2^n+j-1})=\Re(e_{2^{n+1}+u-1}), \quad u=j-2^n, \; 1\leq u\leq 2^n-1.  \end{equation*} Theorem \ref{th:StructureLeja} says that $e_{2^{n+1}+u-1}=\rho z$ where $\rho$ is a $2^{n+1}$-st root of $-1$ and $z$ is a $2^n$-th root of $1$. This means that the angle $\beta$ that gives $b=x_j$ is of the form
\begin{equation*} \beta = (2\tau+1)\pi/2^{n+1} + 2\tau'\pi/2^n \quad \text{with $\tau,\tau'\in \ZZ$}. \end{equation*}
It follows that $|\sin 2^n\beta|=1$ and $|\sin \beta|\geq \sin (\pi/2^{n+1})$. This gives inequality \eqref{eq:ineqsin} and  concludes the proof of the lemma.  \end{proof}
\subsection{A lower bound}\label{sec:lowerbound}
We now show that when we remove one point from $\mathcal{L}_d$, the Lebesgue constant grows significantly faster.  Write $a_i=\cos (i\pi/d)$. Suppose that $A_j:=\mathcal{L}_d\setminus\{a_j\}$ with $1\leq j\leq d-1$ (so that we remove a point different from $1$ and $-1$). We compute the values of the FLIPs for $A_j$ at the missing point $a_j$. From $w_{\mathcal{L}_d}(\cdot)=w_{A_j}(\cdot)(\cdot - a_j)$ we get
$$ w_{A_j}(a_j)=w'_{\mathcal{L}_d}(a_j)\quad\text{and}\quad w'_{\mathcal{L}_d}(a_i)= w'_{A_j}(a_i)(a_i-a_j), \quad i\neq j. $$
Hence,
$$\ell(A_j,a_i;a_j)=\frac{w_{A_j}(a_j)}{(a_j-a_i)w'_{A_j}(a_i)}=-\frac{w'_{\mathcal{L}_d}(a_j)}{w'_{\mathcal{L}_d}(a_i)}.$$ 
Yet, as easily follows from \eqref{eq:wGLtrigform}, $$w'_{\mathcal{L}_d}(a_i)=\pm 2^{1-d}d, \quad i=1,\dots,d-1.$$ Hence $|\ell(A_j,a_i;a_j)|=1$ for $d-2$ values of $i$, namely for $i=1,\dots,d-1$, $i\neq j$. Consequently,
\begin{equation}\label{eq:lebconsonemis}\Delta(A_j) \geq \sum_{i=1, i\neq j}^{d-1} |\ell(A_j,a_i;a_j)|= d-2.\end{equation}
Here is the consequence about our $\Re$-leja sequences. If $X$ is a $\Re$-Leja sequence, then $X(0:2^n-1)$ is formed of all the Chebyshev-Lobatto points of degree $2^n$ with only one missing and this missing point is different from $1$ and $-1$ (as soon as $n\geq 2$) which are the first two points of the sequence. Hence, according to \eqref{eq:lebconsonemis},  $\Delta(X(0:2^n-1))\geq 2^n-2$ which shows that the Lebesgue constant $\Delta(X_k)$ cannot grows slower than $k$.

\subsection{Interpolation at modified Chebyshev points}\label{sec:modchebpoints} We introduce other sets of interpolation points that will naturally come into play when dealing with $B=X(2^n+1:k-1)$. When $\cos\beta$ is not an extremal point of $T_d$, that is $\cos \beta\not\in \mathcal{L}_d$, then the equation $T_d(x)=T_d(\cos\beta)$ has $d$ roots in $[-1,1]$. The set of these roots --- which we call \emph{modified Chebyshev points} --- will by denoted by $\mathcal{T}_d^{(\beta)}$. Since the change of variable $x=\cos t$ transforms the equation in $\cos dt =\cos d\beta$, we have
\begin{equation*} \mathcal{T}_d^{(\beta)}=\{\cos \beta_j : \; \beta_j:= \beta + 2j\pi/d,\; j=0,\dots, d-1\}.\end{equation*} The following result is probably known but we are unable to provide references. 

\begin{lemma}\label{th:lebconsmodcheb} We have $\Delta(\mathcal{T}_d^{(\beta)})=O(\log d / |\sin d\beta|)$ as $d\to\infty$ and the constant involved in O does not depend on $\beta$. Equivalently, there exist $M$ such that, 
$$\Delta(\mathcal{T}_d^{(\beta)})\leq \log (d+1) / |\sin d\beta|), \quad d\geq 1.$$ 
\end{lemma}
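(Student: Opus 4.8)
The plan is to write down the fundamental Lagrange interpolation polynomials for $\mathcal{T}_d^{(\beta)}$ in closed form, reduce the Lebesgue constant to a trigonometric sum, and then estimate that sum by a cotangent‑sum argument of the kind used classically for the first–kind Chebyshev points (which is the case $\beta=\pi/(2d)$, where $\sin d\beta=1$); the factor $1/|\sin d\beta|$ will simply come out of the derivative of $w_{\mathcal{T}_d^{(\beta)}}$ at the nodes. First I would note that, since $w_{\mathcal{T}_d^{(\beta)}}(\cos t)=2^{1-d}(\cos dt-\cos d\beta)$ and $\sin(d\beta_k)=\sin(d\beta)$, one has $w'_{\mathcal{T}_d^{(\beta)}}(\cos\beta_k)=2^{1-d}d\,\sin(d\beta)/\sin\beta_k$, so that
$$\ell\big(\mathcal{T}_d^{(\beta)},\cos\beta_k;\cos t\big)=\frac{(\cos dt-\cos d\beta)\,\sin\beta_k}{d\,\sin(d\beta)\,(\cos t-\cos\beta_k)},$$
and therefore, writing $\Lambda_\beta(\cos t)=\sum_{k=0}^{d-1}|\ell(\mathcal{T}_d^{(\beta)},\cos\beta_k;\cos t)|$,
$$\Delta(\mathcal{T}_d^{(\beta)})=\max_{t\in\RR}\Lambda_\beta(\cos t),\qquad \Lambda_\beta(\cos t)=\frac{|\cos dt-\cos d\beta|}{d\,|\sin d\beta|}\sum_{k=0}^{d-1}\frac{|\sin\beta_k|}{|\cos t-\cos\beta_k|}.$$
Since both $\mathcal{T}_d^{(\beta)}$ and $|\sin d\beta|$ depend on $\beta$ only modulo $2\pi/d$ and up to sign, I may assume $0<\beta<\pi/d$, so that $\sin d\beta>0$ (and $\cos\beta\notin\mathcal{L}_d$ is automatic).

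The key device is the elementary identity $\cot u-\cot v=\sin(v-u)/(\sin u\sin v)$, used with $u=(t-\beta_k)/2$ and $v=(t+\beta_k)/2$: it gives $2\sin\beta_k/(\cos\beta_k-\cos t)=\cot\frac{t-\beta_k}{2}-\cot\frac{t+\beta_k}{2}$, hence
$$\sum_{k=0}^{d-1}\frac{|\sin\beta_k|}{|\cos t-\cos\beta_k|}\ \le\ \frac12\sum_{\gamma}\Big|\cot\tfrac{t-\gamma}{2}\Big|,$$
the right‑hand sum being over the $2d$ numbers $\gamma\in\{\,2j\pi/d\pm\beta:\ 0\le j\le d-1\,\}$, i.e. the $2d$ solutions of $\cos d\gamma=\cos d\beta$ in a period. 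These points have circular gaps alternating between $2\beta$ and $2\pi/d-2\beta$, so they occur in $d$ pairs with consecutive pair‑centres exactly $2\pi/d$ apart; in particular any arc of length $\le 2\pi/d$ contains at most an absolute number $C$ of them. I would now split the cotangent sum according to whether the circular distance $\rho(t,\gamma)$ is $\le\pi/d$ or $>\pi/d$. For a ``close'' $\gamma$ one has $|\cos dt-\cos d\beta|=|\cos dt-\cos d\gamma|\le d\,\rho(t,\gamma)$ (because $\cos d\gamma=\cos d\beta$) while $|\cot\frac{t-\gamma}{2}|\le 2/\rho(t,\gamma)$, so the contribution of each close $\gamma$ to $\Lambda_\beta(\cos t)$ is $\le 1/|\sin d\beta|$, and there are at most $C$ of them; in particular this absorbs the removable singularities at the nodes. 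For a ``far'' $\gamma$ I would use only $|\cos dt-\cos d\beta|\le 2$ and $|\cot\frac{t-\gamma}{2}|\le 2/\rho(t,\gamma)$, and partition the far points into the shells $\{\gamma:\ m\pi/d<\rho(t,\gamma)\le (m+1)\pi/d\}$, $1\le m\le d$; each shell is a union of two arcs of length $\pi/d$, hence contains $O(1)$ points, so $\sum_{\text{far}}1/\rho(t,\gamma)=O(d\log d)$ and the total far contribution is $O(\log d)/|\sin d\beta|$. Adding the two parts yields $\Lambda_\beta(\cos t)\le M\log(d+1)/|\sin d\beta|$ with $M$ absolute, which is the assertion (the values $d=1,2$ being trivial).

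The only delicate point, and the reason the cotangent identity is worth setting up, is that the constant $M$ must be independent of $\beta$. Once $1/|\sin d\beta|$ has been pulled out, the remaining quantity $|\cos dt-\cos d\beta|\sum_k|\sin\beta_k|/|\cos t-\cos\beta_k|$ is essentially combinatorial: it is governed by the uniform $2\pi/d$‑spacing of the pairs $2j\pi/d\pm\beta$, a spacing that does not degenerate as $\beta\to0$ even though the intra‑pair gap $2\beta$ does, and the only possible blow‑up — of the cotangent terms when $t$ approaches one of the $2d$ points — is cancelled by the prefactor $\cos dt-\cos d\beta$, which vanishes there at the matching linear rate. Thus the estimate for the ``far'' shells is a plain harmonic‑series bound with a $\beta$‑free constant, and the ``close'' part contributes $O(1)/|\sin d\beta|$ uniformly. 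No step appears to present a genuine obstacle beyond this bookkeeping of constants.
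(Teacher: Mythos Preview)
Your argument is correct. The opening is identical to the paper's: both compute
\[
\ell\big(\mathcal{T}_d^{(\beta)},\cos\beta_j;\cos t\big)=\frac{\sin\beta_j\,(\cos dt-\cos d\beta)}{d\,\sin d\beta\,(\cos t-\cos\beta_j)}
\]
and extract the factor $1/|\sin d\beta|$. In fact your cotangent identity $2\sin\beta_k/(\cos\beta_k-\cos t)=\cot\tfrac{t-\beta_k}{2}-\cot\tfrac{t+\beta_k}{2}$ is, after dividing through by $\sin\tfrac{t-\beta_k}{2}\sin\tfrac{t+\beta_k}{2}$, exactly the same splitting the paper performs via $\sin\beta_j=\sin\tfrac{t+\beta_j}{2}\cos\tfrac{t-\beta_j}{2}-\sin\tfrac{t-\beta_j}{2}\cos\tfrac{t+\beta_j}{2}$; both produce the same $2d$ terms indexed by $\gamma\in\{2j\pi/d\pm\beta\}$.

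The genuine difference is in the final estimate. The paper keeps the numerator factors $\sin\tfrac{d(t\pm\beta)}{2}$ (coming from $\cos dt-\cos d\beta$) attached to the corresponding denominators $\sin\tfrac{t\mp\beta_j}{2}$, so that after summing over $j$ each of the two sums is \emph{recognized} as the Lebesgue function $F(t\mp\beta)=\tfrac{1}{d}\sum_j\bigl|\sin(dt/2)/\sin((t-2j\pi/d)/2)\bigr|$ for the $d$-th roots of unity, whose maximum is $O(\log d)$ by Gronwall. You instead detach the prefactor $|\cos dt-\cos d\beta|$ and bound the cotangent sum directly by a close/far shell argument, using that the $2d$ points are a union of two arithmetic progressions of step $2\pi/d$ (so any arc of length $\pi/d$ meets each progression in at most one point). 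The paper's route is shorter and ties the bound to a classical constant; yours is self-contained and makes the $\beta$-uniformity transparent without an external citation. Both give the same $O(\log d/|\sin d\beta|)$ with an absolute constant.
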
 
\begin{proof} First, since $$w_{\mathcal{T}_d^{(\beta)}}(x)= T_d(x)-T_d(\cos\beta),$$
see the introduction for the notation $w_{\mathcal{T}_d^{(\beta)}}$, we have 
\begin{equation*} \ell (\mathcal{T}_d^{(\beta)}, \cos \beta_j; x)= \frac{T_d(x)-T_d(\cos\beta)}{T'_d(\cos\beta_j) \; (x-\cos\beta_j)}=\frac{\sin \beta_j (\cos dt -\cos d\beta)}{d\sin d\beta (\cos t -\cos \beta_j)}, \quad x=\cos t.\end{equation*}
A use of the sum-to-product identity for cosines now yields
\begin{equation}\label{eq:trigstuff}\ell (\mathcal{T}_d^{(\beta)}, \cos \beta_j; x)= \frac{\sin \beta_j}{d\sin d\beta} \frac{\sin(d(t+\beta)/2)\sin(d(t-\beta)/2)}{\sin((t+\beta_j)/2)\sin((t-\beta_j)/2)}.\end{equation}
Yet, we also have $$\sin \beta_j = \sin ((t+\beta_j)/2)\cos((t-\beta_j)/2)-\sin ((t-\beta_j)/2)\cos((t+\beta_j)/2).$$ Using this in \eqref{eq:trigstuff}, we obtain after simplification, 
\begin{multline} \ell (\mathcal{T}_d^{(\beta)}, \cos \beta_j; x)= \frac{1}{d\sin d\beta}\left\{
\frac{\cos((t-\beta_j)/2)\sin(d(t+\beta)/2)\sin(d(t-\beta)/2)}{\sin((t-\beta_j)/2)}\right.
\\- \left.\frac{\cos((t+\beta_j)/2)\sin(d(t+\beta)/2)\sin(d(t-\beta)/2)}{\sin((t+\beta_j)/2)}\right\}
. \end{multline} 
It follows that
\begin{equation*} |\ell (\mathcal{T}_d^{(\beta)}, \cos \beta_j; x)|\leq \frac{1}{d|\sin d\beta|}\left\{ \left|\frac{\sin(d(t-\beta)/2)}{\sin((t-\beta_j)/2)}\right|+\left|\frac{\sin(d(t+\beta)/2)}{\sin((t+\beta_j)/2)}\right|\right\}, \quad x=\cos t.  
\end{equation*}
Hence, 
\begin{equation*} \Delta(\mathcal{T}_d^{(\beta)}) \leq \frac{1}{|\sin d\beta|} \left\{\max_{t\in \RR} F(t-\beta)+\max_{t\in \RR} F(t+\beta)\right\}=
 \frac{2}{|\sin d\beta|} \max_{t\in \RR} F(t),\end{equation*}
where $$F(t)=\frac{1}{d} \sum_{j=0}^{d-1} \left|\frac{\sin(dt/2)}{\sin((t-2j\pi/d)/2)}\right|.$$
But $\max_{t\in \RR} F(t)$ is exactly the Lebesgue constant for the $d$-th roots of unity which is known to be $O(\log d)$, see \cite{gronwall}.  
 \end{proof}
\section{Proof of Theorem \ref{thm:upper.bound}}
\subsection{Further reduction} We use Lemma \ref{th:wCL} and the classical estimate \eqref{eq:lebCL} of Ehlich and Zeller in \eqref{eq:rellebconst2} to obtain the following lemma.  

\begin{lemma} Let $X$ be a $\Re$-Leja sequence and let $2^n+1< k \leq 2^{n+1}$. If 
$A=X(0:2^n)$ and $B=X(2^n+1:k-1)$ then 
\begin{equation}\label{eq:pmtsecondstep} \Delta(X_k)\leq M \log2^n \max_{x\in [-1,1],\, a\in A}\frac{|w_B(x)|}{|w_B(a)|} + \frac{\Delta(B)}{\sin(\pi/2^{n+1})}, \end{equation}
where $M$ does not depend on $k$.  
\end{lemma}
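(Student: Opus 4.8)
The plan is simply to feed the two estimates we already have into the bound \eqref{eq:rellebconst2}, which itself comes from Lemma~\ref{th:alg_rel} applied to the two-block partition $X_k = A \cup B$. First I would observe that this partition is legitimate: the entries of a $\Re$-Leja sequence are pairwise distinct (repeated values having been discarded by definition), so the consecutive index blocks $\{0,\dots,2^n\}$ and $\{2^n+1,\dots,k-1\}$ produce genuinely disjoint subsets $A=X(0:2^n)$ and $B=X(2^n+1:k-1)$ whose union is $X_k$; moreover the hypothesis $2^n+1<k\leq 2^{n+1}$ guarantees $B\neq\emptyset$. Hence Lemma~\ref{th:alg_rel} with $K=[-1,1]$ gives
\[
\Delta(X_k)\leq \Delta(A)\, \max_{x\in[-1,1],\,a\in A}\left|\frac{w_B(x)}{w_B(a)}\right| + \Delta(B)\,\max_{x\in[-1,1],\,b\in B}\left|\frac{w_A(x)}{w_A(b)}\right|.
\]

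Next I would estimate the two factors attached to $A$. By Lemma~\ref{th:RLareGL} the underlying set of $A=X(0:2^n)$ is exactly $\mathcal{L}_{2^n}$, so the Ehlich--Zeller estimate \eqref{eq:lebCL} applies with a constant $M$ independent of $n$ (hence of $k$), yielding $\Delta(A)=\Delta(\mathcal{L}_{2^n})\leq M\log 2^n$. This turns the first summand into $M\log 2^n\,\max_{x\in[-1,1],\,a\in A}|w_B(x)/w_B(a)|$, which is precisely the first term of \eqref{eq:pmtsecondstep}. For the second summand I would invoke Lemma~\ref{th:wCL}, whose hypotheses ($2^n+1<k\leq 2^{n+1}$, the same $A$, $B$, and $K=[-1,1]$) match the present setting verbatim; it gives $\max_{x\in[-1,1],\,b\in B}|w_A(x)/w_A(b)|\leq 1/\sin(\pi/2^{n+1})$, so multiplying by $\Delta(B)$ produces the second term of \eqref{eq:pmtsecondstep}. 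Adding the two bounds completes the proof.

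As for difficulty, there is essentially no obstacle: the lemma is a bookkeeping step that assembles three facts already established, namely the FLIP factorization of Lemma~\ref{th:alg_rel}, the classical Ehlich--Zeller bound, and Lemma~\ref{th:wCL}. The only points deserving a moment's care are checking that $A$ and $B$ are disjoint (so that \eqref{eq:rellebconst2} is legitimately applicable) and making sure the $O(\log 2^n)$ constant is uniform; the latter is unproblematic because $A$ is the \emph{full} Chebyshev--Lobatto set $\mathcal{L}_{2^n}$, with no point removed, so the standard uniform form of \eqref{eq:lebCL} applies directly. The genuine analytic work of the paper is deferred to the later estimation of $w_B$ and $\Delta(B)$, which \eqref{eq:pmtsecondstep} isolates as the remaining task.
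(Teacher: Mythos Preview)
Your proposal is correct and follows exactly the paper's own argument: the lemma is obtained by inserting the Ehlich--Zeller estimate \eqref{eq:lebCL} for $\Delta(A)=\Delta(\mathcal{L}_{2^n})$ and the bound of Lemma~\ref{th:wCL} into the two-block inequality \eqref{eq:rellebconst2}. Your additional remarks about the disjointness of $A$ and $B$ and the nonemptiness of $B$ are valid sanity checks that the paper leaves implicit.
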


The remaining required information is collected in the following two theorems. 

\begin{theorem}\label{th:tecstuf1} Let $X$ be a $\Re$-Leja sequence and let $2^n+1< k \leq 2^{n+1}$. If $A=X(0:2^n)$ and $B=X(2^n+1:k-1)$ then  
\begin{equation*}\max_{x\in [-1,1],\, a\in A}\frac{|w_B(x)|}{|w_B(a)|} \leq  2^{2n+2}.\end{equation*}
\end{theorem}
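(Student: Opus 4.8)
The plan is to write $B=X(2^n+1:k-1)$ as a disjoint union of subsets each of which is a set of modified Chebyshev points, then estimate $w_B$ and $|w_B(a)|$ for $a\in A=\mathcal{L}_{2^n}$ on each piece separately. Recall from Corollary \ref{cor:structure.proj} that $B$ consists of the real parts $\Re(e_{2^{n+1}},\dots,\Re(e_{2^{n+1}+(k-1-2^n)-1}))$, and, by the binary-expansion decomposition \eqref{eqn:define.rho} applied to the index range $2^{n+1}\le s<2^{n+1}+2^n$, each such block of Leja points $\rho_{j-1}\cdots\rho_0 E^{(j)}_{2^{n_j}}$ contributes a partial set of $2^{n_j}$-th roots of unity rotated by a $2^{n+1}$-th root of $-1$; taking real parts turns a full rotated root-of-unity block into (part of) a set $\mathcal{T}_{2^{n_j}}^{(\beta)}$ of modified Chebyshev points with the relevant angle $\beta$ of the form $(2\tau+1)\pi/2^{n+1}$. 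So $B$ is covered by at most $n$ subsets $B_0,\dots$, each contained in some $\mathcal{T}_{2^{m}}^{(\beta_m)}$ with $m\le n-1$.

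For a single piece $C\subset \mathcal{T}_d^{(\beta)}$ with $d\le 2^{n-1}$, I would bound $|w_C(x)|\le |w_{\mathcal{T}_d^{(\beta)}}(x)|/\!\!\min|x-\text{(extra point)}|\cdots$ — more cleanly, since $C\subseteq \mathcal{T}_d^{(\beta)}$ and all these points lie in $[-1,1]$, one has $|w_C(x)|\le 2^{|C|}\le 2^d$ trivially for $x\in[-1,1]$; the real work is the lower bound on $|w_C(a)|$ for $a=\cos(j\pi/2^n)\in\mathcal{L}_{2^n}$. Here one writes $a=\cos\alpha$ with $\alpha$ a multiple of $\pi/2^n$, and each factor $|a-\cos\beta_\ell|=|\cos\alpha-\cos(\beta+2\ell\pi/d)| = 2|\sin((\alpha+\beta_\ell)/2)\sin((\alpha-\beta_\ell)/2)|$; using that $\beta$ is an odd multiple of $\pi/2^{n+1}$ while $\alpha$ is an even multiple of $\pi/2^{n+1}$, the arguments $(\alpha\pm\beta_\ell)/2$ stay bounded away from multiples of $\pi$ by roughly $\pi/2^{n+1}$, so each factor is $\ge M\,(1/2^{n})\cdot(\text{the generic size of }|\cos\alpha-\cos\beta_\ell|)$. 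Summing the logarithms over the $d$ factors, the "generic size" part telescopes into something like $|\sin d\alpha'|/d$ which is harmless, and the loss is at most a factor $(2^{n})^{-d}$ relative to a quantity bounded below by a constant; combined with the $2^d$ upper bound for $|w_C(x)|$ this gives $\max |w_C(x)|/|w_C(a)|\le (2\cdot 2^{n})^{d}\le 2^{(n+1)d}$. One then multiplies the estimates over the (at most $n$, but with exponents $d=2^{n_j}$ summing to $k-1-2^n<2^n$) pieces, so the exponents add up to at most $2^n$...

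The main obstacle I anticipate is precisely controlling $|w_B(a)|$ from below without an exponential-in-$n$ blow-up in the exponent: naively the product over all pieces would give $\prod (2^{n+1})^{2^{n_j}}=2^{(n+1)\sum 2^{n_j}}$ with $\sum 2^{n_j}<2^n$, i.e. $2^{(n+1)2^n}$, which is far worse than the claimed $2^{2n+2}$. So the bound as stated must come from a genuinely global (not piecewise) argument, presumably writing $w_B = w_{X_k}/w_A$ where $X_k$ sits inside $\mathcal{L}_{2^{n+1}}$, using the clean trigonometric formula \eqref{eq:wGLtrigform} for $w_{\mathcal{L}_{2^{n+1}}}$ and $w_{\mathcal{L}_{2^n}}$, and then estimating $|w_B(x)/w_B(a)|$ as a ratio of such explicit sine products times the contribution of the finitely many Chebyshev-Lobatto points of level $2^{n+1}$ that lie outside $B$. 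The factor $2^{2n+2}=4\cdot(2^n)^2$ strongly suggests the bound is $\big(2^{n+1}/(2\sin(\pi/2^{n+1}))\big)^2$-type, coming from at most two "bad" linear factors $1/(x-\text{pt})$ in $w_A$ or $w_B$ evaluated near a near-coincidence; so I would aim to show that, after cancellation in $w_{X_k}/w_A$, the ratio $|w_B(x)|/|w_B(a)|$ reduces to a bounded trigonometric quantity multiplied by at most two reciprocals of gaps of size $\ge \sin(\pi/2^{n+1})\ge M/2^{n+1}$, yielding $O(2^{2n})$. The hard part is the exact bookkeeping of which Chebyshev-Lobatto points of level $2^{n+1}$ are and are not in $X_k$, since that determines exactly how many uncancelled linear factors survive.
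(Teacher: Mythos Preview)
Your proposal does not reach the target bound, and the gap is a missing structural observation together with a missing lemma.

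First, the decomposition. Using the binary expansion $k-1=2^{n}+2^{n_1}+\dots+2^{n_r}$ (with $n_0:=n+1>n_1>\dots>n_r\ge 0$) and the structure of Leja sections, each block $\Re\big(E(d_i:d_{i+1}-1)\big)$ is a \emph{complete} set of modified Chebyshev points $\mathcal{T}_{2^{n_{i+1}}}^{(\beta_0+\dots+\beta_i)}$, with $\beta_j=(2t_j+1)\pi/2^{n_j}$; it is not a partial set. Hence
\[
w_B(x)=\prod_{i=0}^{r-1}\Big(T_{2^{n_{i+1}}}(x)-T_{2^{n_{i+1}}}\big(\cos(\beta_0+\dots+\beta_i)\big)\Big),
\]
so for $x=\cos t$, $a=\cos\varphi\in A=\mathcal{L}_{2^n}$, the ratio $|w_B(x)/w_B(a)|$ is a product of $r$ ratios of differences of cosines. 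Bounding each numerator by $2$ and applying the sum-to-product formula to each denominator reduces the problem to bounding
\[
\prod_{i=0}^{r-1}\big|\sin 2^{n_{i+1}-1}(\varphi-\beta_0-\dots-\beta_i)\big|
\quad\text{and}\quad
\prod_{i=0}^{r-1}\big|\sin 2^{n_{i+1}-1}(\varphi+\beta_0+\dots+\beta_i)\big|
\]
from \emph{below}. This is where your piecewise argument loses: treating each sine separately costs a factor $2^{n}$ per factor, hence $2^{n\cdot r}$ overall. The paper instead proves a telescoping inequality (its Lemma~\ref{lem:ineq+}): if $2^{n_j}\varphi_j\equiv\pi\pmod{2\pi}$ for $j<r$, then
\[
\prod_{i=0}^{r-1}\big|\sin 2^{n_{i+1}-1}(\varphi-\varphi_0-\dots-\varphi_i)\big|\ \ge\ \frac{1}{2^{\,n_0-n_r}}\,\big|\cos 2^{n_0-1}\varphi\big|,
\]
established by induction from $|\sin\alpha|\ge |\sin 2^m\alpha|/2^m$ and the double-angle formula. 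Applying it once with $\varphi_j=\beta_j$ and once with $\varphi_j=-\beta_j$ gives a lower bound $2^{-2(n_0-n_r)}\cos^2(2^{n_0-1}\varphi)$ on the product of the two sine-products, and since $a\in\mathcal{L}_{2^n}$ means $2^{n_0-1}\varphi=2^n\varphi\equiv 0\pmod\pi$, the cosine equals $\pm 1$. The $2^r$ from the numerator cancels the $2^r$ from sum-to-product, leaving exactly $2^{2(n_0-n_r)}\le 2^{2n+2}$.

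Your fallback idea of writing $w_B=w_{X_k}/w_A$ and comparing with $w_{\mathcal{L}_{2^{n+1}}}$ does not salvage the situation: for general $k$ in the range, $X_k$ is missing up to $2^n$ points of $\mathcal{L}_{2^{n+1}}$, not ``at most two'', so the number of uncancelled linear factors is not bounded independently of $n$. The polynomial bound $2^{2n+2}$ genuinely comes from the exponent $n_0-n_r\le n+1$ in the telescoping lemma, not from counting a finite number of bad gaps.
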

\begin{theorem}\label{th:tecstuf2} Let $X$ be a $\Re$-Leja sequence and let $2^n+1< k \leq 2^{n+1}$. If  $B=X(2^n+1:k-1)$ then 
\begin{equation*} \Delta(B) \leq M' 2^{2n}\log 2^n,\end{equation*}
where the constant $M'$ does not depend on $k$. 
\end{theorem}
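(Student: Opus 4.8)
The plan is to exploit the fine structure of $B$: the rule of Theorem~\ref{th:StructureLeja} and Corollary~\ref{cor:structure.proj} split $B$ into a disjoint union of the modified Chebyshev sets of Subsection~\ref{sec:modchebpoints}, after which Lemma~\ref{th:alg_rel} reduces the bound to those of Lemma~\ref{th:lebconsmodcheb} plus a control of certain ``$w$-ratios''. First I would describe $B$ explicitly. By Corollary~\ref{cor:structure.proj} applied to the section $E_{3\cdot 2^{n}}$ (whose shape is given by Theorem~\ref{th:StructureLeja}), $B$ consists of the real parts of $\rho$ times the first $\ell:=k-1-2^{n}$ entries of some $2^{n}$-Leja section, where $\rho$ is a $2^{n+1}$-st root of $-1$; here $1\le\ell\le 2^{n}-1$. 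Since $\rho\zeta$ has argument an odd multiple of $\pi/2^{n+1}$ whenever $\zeta$ is a $2^{n}$-th root of unity, every point of $B$ has the form $\cos\big((2t+1)\pi/2^{n+1}\big)$, i.e. $B$ is contained in the set $\mathcal{O}$ of the $2^{n}$ zeros of the monic Chebyshev polynomial $T_{2^{n}}$ (equivalently $\mathcal{O}=\mathcal{L}_{2^{n+1}}\setminus\mathcal{L}_{2^{n}}$), so that $w_{\mathcal{O}}=T_{2^{n}}$; this is the source of the good Chebyshev-type estimates and is consistent with the lower bound of Subsection~\ref{sec:lowerbound}. Writing the binary expansion $\ell=2^{m_{0}}+\cdots+2^{m_{r}}$ with $n-1\ge m_{0}>\cdots>m_{r}\ge 0$ and applying the decomposition \eqref{eqn:define.rho} to the $\ell$-Leja section, I obtain a partition $B=B_{0}\cup\cdots\cup B_{r}$ in which $B_{j}$ is the set of real parts of $e^{i\gamma_{j}}$ times the $2^{m_{j}}$-th roots of unity, where $\gamma_{j}$ is again an odd multiple of $\pi/2^{n+1}$ (the denominator $2^{n+1}$ carried by $\rho$ dominates the denominators $2^{m_{h}}\le 2^{n-1}$ of the further rotations $\rho_{h}$, and $\gamma_{j+1}=\gamma_{j}+\arg\rho_{j}$ with $\arg\rho_{j}$ an odd multiple of $\pi/2^{m_{j}}$). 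Since $n+1-m_{j}\ge 2$, one has $\cos\gamma_{j}\notin\mathcal{L}_{2^{m_{j}}}$, so $B_{j}=\mathcal{T}_{2^{m_{j}}}^{(\gamma_{j})}$ is a genuine set of $2^{m_{j}}$ modified Chebyshev points; and as $\sum_{j}2^{m_{j}}=\ell=\#B$, the $B_{j}$ are automatically pairwise disjoint.

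With this partition, Lemma~\ref{th:alg_rel} gives $\Delta(B)\le\sum_{j=0}^{r}C_{j}\,\Delta(B_{j})$, where $C_{j}=\max_{x\in[-1,1],\,a\in B_{j}}\big|w_{B\setminus B_{j}}(x)/w_{B\setminus B_{j}}(a)\big|$. The factors $\Delta(B_{j})$ are immediate: $2^{m_{j}}\gamma_{j}$ is an odd multiple of $\pi/2^{\,n+1-m_{j}}$, hence $|\sin 2^{m_{j}}\gamma_{j}|\ge\sin(\pi/2^{\,n+1-m_{j}})\ge 2^{\,m_{j}-n}$, and Lemma~\ref{th:lebconsmodcheb} gives $\Delta(B_{j})\le M\,2^{\,n-m_{j}}\log 2^{n}$. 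It therefore suffices to establish, uniformly in $k$, in $j$ and in the Leja sequence, a per-piece bound $C_{j}\,\Delta(B_{j})\le M\,2^{\,n+m_{j}}\log 2^{n}$: indeed $\sum_{j=0}^{r}C_{j}\Delta(B_{j})\le M\,2^{n}\log 2^{n}\sum_{j}2^{m_{j}}=M\,2^{n}\ell\,\log 2^{n}\le M\,2^{2n}\log 2^{n}$, because the $2^{m_{j}}$ are exactly the summands of the binary expansion of $\ell\le 2^{n}$.

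Proving $C_{j}\Delta(B_{j})\le M\,2^{\,n+m_{j}}\log 2^{n}$ is the technical heart and the step I expect to be the main obstacle. For the numerator of $C_{j}$ I would use $w_{B\setminus B_{j}}=\prod_{i\ne j}\big(T_{2^{m_{i}}}-T_{2^{m_{i}}}(\cos\gamma_{i})\big)$ together with $|T_{2^{m_{i}}}(\cos t)-T_{2^{m_{i}}}(\cos\gamma_{i})|=2^{\,1-2^{m_{i}}}|\cos 2^{m_{i}}t-\cos 2^{m_{i}}\gamma_{i}|\le 2^{\,2-2^{m_{i}}}$. For the denominator, fixing $a=\cos\alpha\in B_{j}$, I would write $w_{B\setminus B_{j}}=T_{2^{n}}/w_{C\cup B_{j}}$ with $C:=\mathcal{O}\setminus B$ (so $C\cup B_{j}\subset\mathcal{O}$), then differentiate, using $T_{2^{n}}(a)=0$, $w_{B_{j}}=T_{2^{m_{j}}}-T_{2^{m_{j}}}(\cos\gamma_{j})$, $\sin 2^{m_{j}}\alpha=\sin 2^{m_{j}}\gamma_{j}$ and the Chebyshev derivative formula $T_{d}'(\cos\beta)=2^{\,1-d}d\,\sin d\beta/\sin\beta$ (a consequence of \eqref{eq:wGLtrigform}), to get an exact expression for $|w_{B\setminus B_{j}}(a)|$ in terms of $|T_{2^{n}}'(a)|$, $|T_{2^{m_{j}}}'(a)|$ and $|w_{C}(a)|$. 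After the common powers $2^{\,1-2^{m_{i}}}$ cancel, the whole estimate boils down to lower-bounding quantities of the type $\prod_{i\ne j}|\cos 2^{m_{i}}\alpha-\cos 2^{m_{i}}\gamma_{i}|$ — equivalently, products of distances between zeros of $T_{2^{n}}$, i.e. the values $w_{C}(a)$. This is the delicate point: a factor-by-factor estimate is hopeless (it produces a quantity exponential in $n^{2}$), and what must be exploited is a non-resonance phenomenon forced by the arithmetic, namely that the $\gamma_{i}$ are nested and all are odd multiples of $\pi/2^{n+1}$ — in particular, for $i<j$ one has $\gamma_{j}-\gamma_{i}=(\mathrm{odd})\pi/2^{m_{i}}$, whence $\cos 2^{m_{i}}\alpha=\cos 2^{m_{i}}\gamma_{j}=-\cos 2^{m_{i}}\gamma_{i}$, so that factor is exactly $2|\cos 2^{m_{i}}\gamma_{i}|\ge 2\sin(\pi/2^{\,n+1-m_{i}})$, while for $i>j$ a similar half-angle computation keeps the corresponding factor bounded below, and one shows that the adversarial Leja section cannot make all of these small at once. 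Feeding these bounds back, together with $|\sin 2^{m_{j}}\gamma_{j}|\le 1$, yields the required estimate for $C_{j}\Delta(B_{j})$ and hence $\Delta(B)=O(2^{2n}\log 2^{n})$, which is the content of Theorem~\ref{th:tecstuf2}.
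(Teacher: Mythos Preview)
Your overall architecture coincides with the paper's: the same partition
$B=B_{0}\cup\cdots\cup B_{r}$ into modified Chebyshev sets
$\mathcal{T}_{2^{m_{j}}}^{(\gamma_{j})}$, Lemma~\ref{th:alg_rel} to pass to
$\sum_{j}C_{j}\Delta(B_{j})$, and Lemma~\ref{th:lebconsmodcheb} for
$\Delta(B_{j})$.  Your target per piece,
$C_{j}\Delta(B_{j})\le M\,2^{\,n+m_{j}}\log 2^{n}$, is exactly what the paper
obtains before summing.  The gap is in the one place you flag as the
``main obstacle'': the lower bound for
$\prod_{i\ne j}\bigl|\cos 2^{m_{i}}\alpha-\cos 2^{m_{i}}\gamma_{i}\bigr|$.

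Two concrete problems.  First, for $i<j$ your identity
$\bigl|\cos 2^{m_{i}}\alpha-\cos 2^{m_{i}}\gamma_{i}\bigr|
 =2\bigl|\cos 2^{m_{i}}\gamma_{i}\bigr|$
is correct (and is equivalent to the paper's observation that the
``minus'' sines equal~$1$ for $i<j$), but your subsequent per-factor bound
$\ge 2\sin\bigl(\pi/2^{\,n+1-m_{i}}\bigr)$ is exactly the ``factor-by-factor
estimate'' you rightly call hopeless: for $\ell=2^{n}-1$ (so
$m_{i}=n-1-i$) and $j=r=n-1$, that product is
$\prod_{i=0}^{n-2}2\sin(\pi/2^{\,i+2})$, of order $2^{-c\,n^{2}}$.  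Second,
for $i>j$ you only say ``a similar half-angle computation keeps the
corresponding factor bounded below'' and that the Leja section
``cannot make all of these small at once''; this is the whole difficulty,
and nothing in your outline supplies the mechanism.  The detour through
$w_{B\setminus B_{j}}=T_{2^{n}}/w_{C\cup B_{j}}$ is correct algebra but is
circular: it trades the unknown $\bigl|w_{B\setminus B_{j}}(a)\bigr|$ for
$\bigl|w_{C}(a)\bigr|$, and $C=\mathcal{O}\setminus B$ carries no usable
structure beyond what $B$ already has.

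What is missing is the collective estimate that the paper isolates as
Lemma~\ref{lem:ineq+}: after the sum-to-product step one must bound
products of the form
$\prod_{i}\bigl|\sin 2^{m_{i}-1}(\theta\pm\gamma_{i})\bigr|$
from below, and this is done by repeatedly applying
$|\sin\alpha|\ge 2^{-m}|\sin 2^{m}\alpha|$ and telescoping via the
double-angle identity, not by bounding factors individually.  The paper
also uses a small but essential trick you do not mention: the sine
$|\sin 2^{m_{j}}\gamma_{j}|$ appearing in $\Delta(B_{j})$ is rewritten as
$\bigl|\sin 2^{m_{j}-1}(\alpha+\gamma_{j})\bigr|$ and absorbed into the
``plus'' product before Lemma~\ref{lem:ineq+} is applied, which is what
makes the bound $2^{n}$ (rather than $2^{2n}$) on that product possible.
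With Lemma~\ref{lem:ineq+} in hand your outline becomes the paper's proof;
without it the argument does not close.
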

\begin{proof}[End of proof of Theorem \ref{thm:upper.bound}] When $k-1$ is a power of $2$, the points of $X_k$ form a complete set of Chebyshev-Lobatto points and the bound is implied by Ehlich and Zeller's estimate \eqref{eq:lebCL}. We assume $2^n+1 < k \leq 2^{n+1}$. Using Theorems \ref{th:tecstuf1} and \ref{th:tecstuf2} in \eqref{eq:pmtsecondstep}, we obtain
\begin{equation}\label{eq:pmtthirdstep} \Delta(X_k) \leq M 2^{2n+2}\log 2^n  +  M' \frac{2^{2n}\log 2^n}{\sin(\pi/2^{n+1})}.\end{equation}
Since $n=\lfloor\log_2(k)\rfloor$ (or $\lfloor\log_2(k)\rfloor-1$ in the case $k=2^{n+1}$) and $1/\sin(\pi/2^{n+1})=O(2^n)$, this readily gives the existence of a constant $M''$ (independent of $k$) such that
\begin{equation*} \Delta(X_k) \leq M'' k^3\log k, \quad \text{for $k$ large enough} .\end{equation*} Observe that the highest power (that is, $k^3$) comes from the second term in \eqref{eq:pmtthirdstep}.
 \end{proof}
 \subsection{A trigonometric inequality}
 The proofs of the two remaining steps rest on an elementary inequality that we present in this subsection. As in \cite{jpcpvm}, the key observation is
  \begin{equation}\label{eq:ineqbasis}|\sin\alpha|\geq |\sin 2^n\alpha|/2^n, \quad n\in \NN,\quad \alpha\in \RR.\end{equation}
\begin{lemma}\label{lem:ineq+}
Let $r\geq 1$ and let $n_0>n_1>\cdots>n_r\geq 0$ be a finite decreasing sequence of natural numbers. If $2^{n_j}\varphi_j=\pi\; [2\pi]$ (i.e. $2^{n_j}\varphi_j=\pi$ mod $2\pi$), $j=0,\dots,r-1$,
then 
\begin{equation}\label{for:trigono-inequ}
\prod_{j=0}^{r-1}|\sin 2^{n_{j+1}-1}(\varphi-\varphi_0-\cdots-\varphi_j)|\geq (1/2^{n_0-n_r})|\cos 2^{n_0-1}\varphi|, \quad \varphi\in \RR.
\end{equation}
\end{lemma}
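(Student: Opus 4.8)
The plan is to establish \eqref{for:trigono-inequ} by induction on $r$, the only ingredients being the elementary inequality \eqref{eq:ineqbasis} and one remark about the first phase $\varphi_0$. Since $2^{n_0}\varphi_0=\pi\ [2\pi]$, halving gives $2^{n_0-1}\varphi_0=\pi/2\ [\pi]$, and since $x\mapsto|\sin x|$ has period $\pi$,
\[ |\sin 2^{n_0-1}(\varphi-\varphi_0)|=|\sin(2^{n_0-1}\varphi-\pi/2)|=|\cos 2^{n_0-1}\varphi|,\qquad\varphi\in\RR; \]
call this identity $(\star)$. Applying \eqref{eq:ineqbasis} to the angle $2^{p}(\varphi-\varphi_0)$ with $n=n_0-1-p$ doublings (for any integer $p\le n_0-1$) and then invoking $(\star)$ produces the one-sided estimate
\[ |\sin 2^{p}(\varphi-\varphi_0)|\ge 2^{-(n_0-1-p)}|\cos 2^{n_0-1}\varphi|,\qquad p\le n_0-1. \]
With $p=n_1-1$ this is already the statement for $r=1$, since then $n_0-1-p=n_0-n_1=n_0-n_r$, so $r=1$ serves as the base case.

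For the inductive step, assume the statement with $r-1$ in place of $r$ (so $r\ge 2$) and let $P_r(\varphi)$ denote the left-hand side of \eqref{for:trigono-inequ}. I would first split off the $j=0$ factor:
\[ P_r(\varphi)=|\sin 2^{n_1-1}(\varphi-\varphi_0)|\cdot\prod_{j=1}^{r-1}|\sin 2^{n_{j+1}-1}(\varphi-\varphi_0-\cdots-\varphi_j)|. \]
Setting $\psi:=\varphi-\varphi_0$, the remaining product is exactly the product in \eqref{for:trigono-inequ} attached to the decreasing sequence $n_1>n_2>\cdots>n_r$ (one fewer step) and the phases $\varphi_1,\dots,\varphi_{r-1}$, which still satisfy $2^{n_j}\varphi_j=\pi\ [2\pi]$, evaluated at the variable $\psi$; by the induction hypothesis it is at least $2^{-(n_1-n_r)}|\cos 2^{n_1-1}\psi|$. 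Multiplying back the leftover factor $|\sin 2^{n_1-1}\psi|$ and using $\sin\theta\cos\theta=\tfrac12\sin 2\theta$ with $\theta=2^{n_1-1}\psi$ gives $P_r(\varphi)\ge\tfrac12\,2^{-(n_1-n_r)}|\sin 2^{n_1}\psi|$. Finally the one-sided estimate above with $p=n_1$ bounds $|\sin 2^{n_1}\psi|$ below by $2^{-(n_0-1-n_1)}|\cos 2^{n_0-1}\varphi|$, and since $1+(n_1-n_r)+(n_0-1-n_1)=n_0-n_r$, the bound \eqref{for:trigono-inequ} drops out.

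The delicate point is not any single estimate but the bookkeeping: the powers of $2$ must telescope to precisely $2^{-(n_0-n_r)}$, and the mechanism that makes this happen is the double-angle identity, which fuses the leftover $|\sin 2^{n_1-1}\psi|$ with the cosine $|\cos 2^{n_1-1}\psi|$ delivered by the induction hypothesis into a single sine at the doubled angle $2^{n_1}\psi$, after which one application of \eqref{eq:ineqbasis} followed by $(\star)$ closes the estimate. Everything else reduces to repeated use of \eqref{eq:ineqbasis} together with the congruence $2^{n_0-1}\varphi_0=\pi/2\ [\pi]$; I expect no genuine obstacle beyond organizing the shift $\varphi\mapsto\varphi-\varphi_0$ so that the induction hypothesis applies cleanly to the truncated sequence $n_1>\cdots>n_r$.
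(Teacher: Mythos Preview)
Your proof is correct and follows essentially the same route as the paper: induction on $r$, splitting off the $j=0$ factor, applying the induction hypothesis to the shifted variable $\psi=\varphi-\varphi_0$ and the truncated sequence $n_1>\cdots>n_r$, fusing the leftover sine with the cosine from the hypothesis via the double-angle formula, and closing with one more application of \eqref{eq:ineqbasis} together with the identity $|\sin 2^{n_0-1}(\varphi-\varphi_0)|=|\cos 2^{n_0-1}\varphi|$. The only cosmetic difference is that you package \eqref{eq:ineqbasis}${}+{}(\star)$ into a single ``one-sided estimate'' $|\sin 2^{p}(\varphi-\varphi_0)|\geq 2^{-(n_0-1-p)}|\cos 2^{n_0-1}\varphi|$ and reuse it for both the base case ($p=n_1-1$) and the final step ($p=n_1$), whereas the paper performs those two steps separately each time.
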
 
\begin{proof} The proof is by induction. To treat the case $r=1$, we prove that 
$$|\sin 2^{n_1-1}(\varphi-\varphi_0)|\geq (1/2^{n_0-n_1})|\cos 2^{n_0-1}\varphi|.$$
 Using \eqref{eq:ineqbasis} with $\alpha=2^{n_1-1}(\varphi-\varphi_0)$ and $n=n_0-n_1$ we obtain 
\begin{equation*}
|\sin 2^{n_1-1}(\varphi-\varphi_0)|
\geq (1/2^{n_0-n_1})|\sin 2^{n_0-1}(\varphi-\varphi_0)|.  
\end{equation*}
But, since $2^{n_0}\varphi_0=\pi \; [2\pi]$, $|\sin 2^{n_0-1}(\varphi-\varphi_0)|=|\cos  2^{n_0-1}\varphi|$ and the claim follows.  \par
We now assume that the inequality is true for $r=k$ and prove it for $r=k+1$. The induction hypothesis applied to $\varphi-\varphi_0$ instead of $\varphi$ yields
\begin{equation}\label{for:trigono-inequ1}
\prod_{j=1}^{k}|\sin 2^{n_{j+1}-1}\big((\varphi-\varphi_0)-\varphi_1-\cdots-\varphi_j\big)|\geq (1/2^{n_1-n_{k+1}})|\cos 2^{n_1-1}(\varphi-\varphi_0)|, \quad \varphi\in \RR.
\end{equation} 
multiplying by the term corresponding to $j=0$, we obtain 
\begin{multline}\label{for:trigono-inequ2}
\prod_{j=0}^{k}|\sin 2^{n_{j+1}-1}\big((\varphi-\varphi_0)-\varphi_1-\cdots-\varphi_j\big)|\\ \geq \frac{1}{2^{n_1-n_{k+1}}}|\sin 2^{n_1-1}(\varphi-\varphi_0) \cos 2^{n_1-1}(\varphi-\varphi_0)|\\
 = \frac{1}{2^{n_1-n_{k+1}+1}}|\sin 2^{n_1}(\varphi-\varphi_0)|, \quad \varphi\in \RR.
\end{multline} 
Another use of \eqref{eq:ineqbasis} with $n=n_0-n_1-1$ shows that
\begin{equation*} \frac{1}{2^{n_1-n_{k+1}+1}}|\sin 2^{n_1}(\varphi-\varphi_0)| \geq \frac{1}{2^{n_0-n_{k+1}}} |\sin 2^{n_0-1}(\varphi-\varphi_0)|, \quad \varphi\in \RR. \end{equation*} The sine on the right hand side is shown to be $|\cos  2^{n_0-1}\varphi|$ as in the case $r=1$. 
\end{proof}
\subsection{Proof of theorem \ref{th:tecstuf1}}
Let $X=X(E)$ and $B=X(2^n+1:k-1)$ with $2^n+1 < k \leq 2^{n+1}$. We write 
\begin{equation}\label{eq:not1} k-1=2^{n}+2^{n_1}+\dots+2^{n_r} \quad\text{with}\;n-1\geq n_1>\dots>n_r\geq 0,\end{equation}
and, to simplify the notation, 
\begin{align} n_0&=n+1,\\
d_i&=2^{n_0}+\cdots + 2^{n_i}, \quad i=0,\dots,r. \end{align}
Then, in view of Corollary \ref{cor:structure.proj}, we have 
\begin{align*} B&=X(2^{n}+1: 2^{n}+(d_1-d_0))
\,\wedge \; \bigwedge_{i=1}^{r-1} X\big(2^n+(d_i-d_0)+1:2^{n}+(d_{i+1}-d_0)\big)\\
&=\bigwedge_{i=0}^{r-1} \Re\big(E(d_i: d_{i+1}-1)\big).\end{align*}
Now using the structure properties of a Leja sequence, see Theorem \ref{th:StructureLeja} and \eqref{eqn:define.rho}, we see that the points of $X(2^n+1,k-1)$ are certain modified Chebyshev points (see subsection \ref{sec:modchebpoints}). Indeed, for $i\in \{0, \dots,r-1\}$, 
\begin{align} E(d_i: d_{i+1}-1) &= \rho_{i}\cdots\rho_1\rho_0E^{(i+1)}_{2^{n_{i+1}}}, \quad \text{with $\rho_j^{2^{n_j}}=-1$},\\
\Re(E(d_i: d_{i+1}-1))&= \mathcal{T}^{(\beta_0+\cdots+\beta_{i})}_{2^{n_{i+1}}},\quad\text{with $\beta_j=\arg \rho_j=(2t_j+1)\pi/2^{n_j}$, $t_j\in\ZZ$}. \end{align} 
This implies the following relation for the polynomial $w_B$, 
\begin{equation*} w_B(x)= \prod_{i=0}^{r-1} \Big\{T_{2^{n_{i+1}}}(x) -T_{2^{n_{i+1}}}(\cos (\beta_0+\cdots+\beta_{i}))\Big\}. \end{equation*}
It follows that for $a=\cos\varphi\in A=X(0:2^n)$
\begin{align}
\max_{x\in [-1,1]}\frac{|w_B(x)|}{|w_B(a)|}&=\max_{t\in\RR} \prod_{i=0}^{r-1} \frac{|\cos (2^{n_{i+1}}t) -\cos(2^{n_{i+1}}(\beta_0+\cdots+\beta_{i}))|}{|\cos (2^{n_{i+1}}\varphi) -\cos(2^{n_{i+1}}(\beta_0+\cdots+\beta_{i}))|} \\
&\leq 2^r \prod_{i=0}^{r-1} 1/\big|\cos (2^{n_{i+1}}\varphi) -\cos(2^{n_{i+1}}(\beta_0+\cdots+\beta_{i}))\big|. \label{eq:pastocos}
\end{align}
Now, a use of the sum-to-product formula for cosines together with two applications of Lemma \ref{for:trigono-inequ} (first with $\varphi_i=\beta_i$, then with $\varphi_i=-\beta_i$) enable us to bound the denominator in \eqref{eq:pastocos} and arrive to 
\begin{equation*} \max_{x\in [-1,1]}\frac{|w_B(x)|}{|w_B(a)|}\leq \frac{2^{2(n_0-n_r)}}{\cos^2 (2^{n_0-1}\varphi)}.\end{equation*} 
It remains to recall that $n_0=n+1$ so that $2^{2(n_0-n_r)}\leq 2^{2n+2}$ and observe that, since $A=\mathcal{L}_{2^n}$, $2^{n_0-1}\varphi=2^n\varphi=0\; [\pi]$ so that $\cos^2 (2^{n_0-1}\varphi)=1$. This concludes the proof of Theorem \ref{th:tecstuf1}. 
\subsection{Proof of theorem \ref{th:tecstuf2}} We still use the fact that, for $X=X(E)$ and $2^n+1<k\leq 2^{n+1}$, we have 
$B=\wedge_{i=0}^{r-1} B_i$, where the underlying set of $B_i$ is $\mathcal{T}^{(\beta_0+\cdots+\beta_{i})}_{2^{n_{i+1}}}$ with $\beta_j=(2t_j+1)\pi/2^{n_j}$, $t_j\in\ZZ$.

Using first Lemma \ref{th:alg_rel} (with $N_i=B_i$) and then Lemma \ref{th:lebconsmodcheb} to bound $\Delta(B_j)$, we obtain
\begin{equation}\label{eq:profftechsruff2}\Delta(B) \leq M \sum_{j=0}^{r-1} \max_{x\in [-1,1],\, a\in B_j}\frac{|w_{B\setminus B_j}(x)|}{|w_{B\setminus B_j}(a)|}\; \frac{\log(2^{n_{j+1}}+1)}{\left|\sin (2^{n_{j+1}}(\beta_0+\cdots+\beta_{j}))\right|}.\end{equation}
Now, just as in \eqref{eq:pastocos} (we just remove one factor), for $a=\cos \theta_j\in B_j$, we have
\begin{align}\label{eq:pastocos2}
\max_{x\in [-1,1]}\frac{|w_{B\setminus B_j}(x)|}{|w_{B\setminus B_j}(a)|}&=\max_{t\in\RR} \prod_{i=0, i\neq j}^{r-1} \frac{|\cos (2^{n_{i+1}}t) -\cos(2^{n_{i+1}}(\beta_0+\cdots+\beta_{i}))|}{|\cos (2^{n_{i+1}}\theta_j) -\cos(2^{n_{i+1}}(\beta_0+\cdots+\beta_{i}))|} \\
&\leq 2^{r-1} \prod_{i=0, i\neq j}^{r-1} 1/|\cos (2^{n_{i+1}}\theta_j) -\cos(2^{n_{i+1}}(\beta_0+\cdots+\beta_{i}))|. 
\end{align}
Again the sum-to-product formula for cosines transforms the right-hand side in a product of sines and it follows that the $j$-term in the right-hand side of \eqref{eq:profftechsruff2} is bounded by the maximum when $a=\cos \theta_j$ runs over $B_j$ of
\begin{equation}\label{eq:profftechsruff22} \log(2^{n_{j+1}}+1) \prod_{i=0, i\neq j}^{r-1} \left|\sin^{-1} 2^{n_{i+1}-1}(\theta_j-\beta_0-\cdots-\beta_i)\right| \prod_{i=0}^{r-1} \left|\sin^{-1} 2^{n_{i+1}-1}(\theta_j+\beta_0+\cdots+\beta_i)\right|.\end{equation}
Here we used the fact that
\begin{equation}\label{eq:groupsines}|\sin 2^{n_{j+1}-1}(\theta_j+\beta_0+\dots+\beta_j)|=|\sin 2^{n_{j+1}}(\beta_0+\dots+\beta_j)| \end{equation}
which enabled us to insert the isolated sine in \eqref{eq:profftechsruff2} into the second product of \eqref{eq:profftechsruff22}. To prove \eqref{eq:groupsines}, we observe that, since $a=\cos \theta_j\in B_j$, we have 
\begin{equation}\label{eq:newassump} 2^{n_{j+1}}\theta_j=2^{n_{j+1}}(\beta_0+\dots +\beta_j)\; [2\pi].\end{equation}
 
We now estimate independently both products in \eqref{eq:profftechsruff22}. The same bound is valid for every $a\in B_j$ and therefore provides an upper bound for the maximum over $B_j$ as required.  

\par\smallskip I) We start with the first product. In view of \eqref{eq:newassump}, since $n_{i+1}> n_{j+1}$ whenever $i<j$, we have
$$2^{n_{i+1}}(\theta_j-\beta_0-\cdots -\beta_i)=2^{n_{i+1}}(\beta_{i+1}+\cdots+\beta_j)\; [2\pi], \quad 0\leq i < j.$$
On the other hand, since $2^{n_s}\beta_s=\pi\; [2\pi]$, we also have 
$$2^{n_{i+1}}(\beta_{i+1}+\cdots+\beta_j)=\pi \; [2\pi], \quad 0\leq i < j.$$
Thus the absolute value of the first $j$ sines equals $1$ and we just need to estimate
\begin{equation}\label{eq:trigineqco2_2}
\prod_{i=j+1}^{r-1}|\sin 2^{n_{i+1}-1}(\theta_j-\beta_0-\cdots-\beta_i)|.
\end{equation}
To do that, we apply Lemma \ref{lem:ineq+} with $\varphi=\theta_j-\beta_0-\cdots-\beta_j$. We obtain the lower bound $(1/2^{n_{j+1}-n_r})|\cos 2^{n_{j+1}-1}(\theta_j-\beta_0-\cdots-\beta_j)|.$
Yet in view of \eqref{eq:newassump} this cosine equals $\pm1$ and we obtain 
\begin{equation}\label{eq:pts2_2}\prod_{i=0, i\neq j}^{r-1} \left|\sin^{-1} 2^{n_{i+1}-1}(\theta_j-\beta_0-\cdots-\beta_i)\right| \leq 2^{n_{j+1}-n_r}.
\end{equation}
Note that, in the case $j=r-1$, the whole product equals $1$ which obviously implies the inequality. The inequality is likewise satisfied in the case $r=1$ (for which the product is empty).  
\par\smallskip II) We now turn to the second product in \eqref{eq:profftechsruff22}. Using again Lemma \ref{lem:ineq+} with $\varphi=\theta_j+\beta_0$ as in \eqref{for:trigono-inequ2}, we get the upper bound 
$$2^{n_1-n_{r}+1}/|\sin 2^{n_1}(\theta_j+\beta_0)|.$$ 
However, since for every $s$, $\beta_s=(2t_s+1)\pi/2^{n_j}$ and $\theta_j=\beta_0+\dots+\beta_j+2q_j\pi/2^{n_{j+1}}$ with $t_s,\,q_j\in \ZZ$, we have
\begin{multline*}2^{n_1}(\theta_j+\beta_0)=2^{n_1}(2\beta_0+\beta_1+\cdots+\beta_j+2q_j\pi/2^{n_{j+1}})\\=2^{{n_1}+1}\beta_0+p\pi=\frac{(2t_0+1)\pi}{2^{n_0-n_1-1}}+p\pi \quad\text{with $p\in \ZZ$}. \end{multline*}
Note that since $n_0=n+1$ and $n>n_1$ we have $n_0-n_1-1>0$. 
This shows that $|\sin 2^{n_1}(\theta_j+\beta_0)|\geq \sin (\pi/ 2^{n_0-n_1-1})\geq 2/2^{n_0-n_1-1}=1/2^{n_0-n_1-2}$. We have therefore proved
\begin{equation}\label{eq:pts2_3} \prod_{i=0}^{r-1} \left|\sin^{-1} 2^{n_{i+1}-1}(\theta_j+\beta_0+\cdots+\beta_i)\right| \leq 2^{n_1-n_{r}+1} \cdot 2^{n_0-n_1-2}  =2^{n_0-n_r-1}\leq 2^n.\end{equation} 
\par\smallskip III) It remains to insert \eqref{eq:pts2_2} and \eqref{eq:pts2_3} in \eqref{eq:profftechsruff2} with the aid of \eqref{eq:profftechsruff22}. Indeed, we obtain  
\begin{align}\Delta(B) &\leq M \sum_{j=0}^{r-1} 2^{n_{j+1}-n_r} \cdot 2^n \cdot \log (2^{n_{j+1}}+1) \\
&\leq M 2^n \sum_{j=0}^{r-1} 2^{n_j+1}\log (2^{n_{j+1}}+1) \leq M 2^{n} \log (2^{n}+1) \sum_{j=0}^{r-1}  2^{n_{j+1}}=O(2^{2n}\log 2^n). \end{align}
This achieves the proof of Theorem \ref{th:tecstuf2}. 
 
\subsection*{Acknowledgement} The work of Phung Van Manh is supported by a PhD fellowship from the Vietnamese government. 
\bibliographystyle{plain}
\bibliography{bib_projection_leja}

\end{document}